\documentclass{amsart}
\usepackage{amssymb, amsthm, amsmath, amsfonts,amscd,bm,fancyhdr}
\usepackage{graphics}
\usepackage{hyperref}
\usepackage[all]{xy}
\usepackage{enumerate}
\usepackage[mathscr]{eucal}
\usepackage{bbm}
\usepackage{tikz}
\usetikzlibrary{decorations.pathreplacing,angles,quotes,knots}
\usepackage{parskip}
\usepackage{GWmacros}

\providecommand{\U}[1]{\protect\rule{.1in}{.1in}}
\setlength{\topmargin}{-.25in}
\setlength{\textheight}{9.25in}
\setlength{\oddsidemargin}{0.0in}
\setlength{\evensidemargin}{0.0in}
\setlength{\textwidth}{6.5in}

\def\theenumi{\arabic{enumi}}

\def\theenumii{\alph{enumii}}
\def\p@enumii{\theenumi.}

\def\theenumiii{\arabic{enumiii}}
\def\p@enumiii{(\theenumi)(\theenumii)}

\def\p@enumiv{\p@enumiii.\theenumiii}

\fancyhf{}
\cfoot{\thepage}
\pagestyle{fancy}   

\theoremstyle{plain}

\newtheorem*{thm}{Theorem}
\newtheorem{lemma}[theorem]{Lemma}

\newtheorem{proposition}[theorem]{Proposition}

\newtheorem{corollary}[theorem]{Corollary}

\numberwithin{equation}{section}

\theoremstyle{definition}

\newtheorem{definition}[theorem]{Definition}
\newtheorem{example}[theorem]{Example}

\newtheorem{remark}[theorem]{Remark}

\newtheorem{thmab}{Theorem}

\newtheorem{corab}[thmab]{Corollary}

\setcounter{tocdepth}{1}

\DeclareMathOperator{\FI}{FI}

\DeclareMathOperator{\Geom}{Geom}
\newcommand{\Sn}{\mathfrak{S}}

\newcommand{\Ob}{\mathcal{O}}

\renewcommand{\R}{\mathcal{R}}
\newcommand{\RR}{\mathbb{R}}

\newcommand{\Lap}{\mathcal{L}}
\newcommand{\Gre}{\mathcal{G}}

\newcommand{\X}{\mathcal{X}}

\newcommand{\dt}{\bullet}
\newcommand{\fg}{$\FI$--graph}
\newcommand{\fs}{$\FI$--set}
\newcommand{\arXiv}[1]{\href{http://arxiv.org/abs/#1}{\nolinkurl{arXiv:#1}}}
\newcommand{\arXivV}[2]{\href{http://arxiv.org/abs/#1}{\nolinkurl{arXiv:#1v#2}}}

\title{Families of Markov chains with compatible symmetric-group actions}

\author[E.~Ramos]{Eric Ramos}
\address[E. Ramos]{University of Oregon Department of Mathematics, Fenton Hall, Eugene, OR 97405}
\email{eramos@uoregon.edu}

\author[G.~White]{Graham White}
\address[G. White]{Indiana University --- Bloomington Department of Mathematics, Rawles Hall, Bloomington, IN 47405}
\email{grrwhite@iu.edu}

\thanks{The first author was supported by NSF grant DMS-1704811.}
\begin{document}

\begin{abstract}
For each $n,r \geq 0$, let $KG(n,r)$ denote the Kneser Graph; that whose vertices are labeled by $r$-element subsets of $n$, and whose edges indicate that the corresponding subsets are disjoint. Fixing $r$ and allowing $n$ to vary, one obtains a family of nested graphs, each equipped with a natural action by a symmetric group $\Sn_n$, such that these actions are compatible. Collections of graphs of this type are common in algebraic combinatorics and include families such as the Johnson Graphs, Crown Graphs and Rook Graphs. In previous work \cite{RW}, the authors systematically studied families of this type using the language of representation stability and $\FI$-modules. In that work, it is shown that such families of graphs exhibit a large variety of asymptotic regular behaviors.\\

The present work applies the theory developed in \cite{RW}, later refined in \cite{RSW}, to study random walks on the graphs of such families. We show that the moments of hitting times exhibit rational function behavior asymptotically. By consequence we conclude similar facts about the entries of the discrete Green's functions, as defined by Chung and Yau \cite{CY}. Finally, we illustrate how the algebro-combinatorial structure of the graphs in these families give bounds on the mixing times of random walks on those graphs.
We suggest some possible directions for future study, including of the appearance, or not, of the cut-off phenomenon, originally presented by Diaconis \cite{D}.
\end{abstract}

\keywords{FI-modules, Representation Stability, Markov chains}

\maketitle

\section{Introduction}
\subsection{Motivation}
For each $r,n \geq 0$, let $KG(n,r)$ denote the graph whose vertices are labeled by the $r$-element subsets of $[n] = \{1,\ldots,n\}$, and whose edges indicate that the corresponding sets are disjoint. In the literature, these graphs are called \emph{Kneser graphs}. What is relevant for the present work is the following structure that one may put on $\{KG(n,r)\}_{n \geq 0}$, for any fixed $r$: for every injection of sets $f:[n] \hookrightarrow [m]$, one obtains a graph homomorphism, (an adjacency-preserving map of vertices),
\[
KG(f):KG(n,r) \rightarrow KG(m,r).
\]
Restricting to the cases wherein $f$ is a permutation, we see that the family $\{KG(n,r)\}$ can be thought of as a family of nested graphs, each of which carries the action of a symmetric group in such a way that these actions are compatible with one another. Families of this type are ubiquitous throughout algebraic combinatorics (see Example \ref{smallex}, as well as the numerous other examples throughout this work). In \cite{RW}, the authors developed a framework for studying these examples by using the language of \emph{representation stability} and \emph{$\FI$-modules} \cite{CEF}.

Let $\FI$ denote the category of finite sets and injective maps. Then an \emph{$\FI$-graph} $G_\dt$ is a functor from $\FI$ to the category of graphs. We have already seen that the assignment
\[
[n] \mapsto KG(n,r) =: G_n
\]
is an $\FI$-graph for each fixed $r \geq 0$. Furthermore, this particular $\FI$-graph is \emph{finitely generated}, in the sense that for $n \gg 0$ every vertex of $G_{n+1}$ is in the image of some $G_n$ (see Definition \ref{fingen}). In \cite{RW}, it is shown that the condition of finite generation yields a plethora of important structural properties of the graphs $G_n$. For instance, one has the following.

\begin{thm}[Ramos \& White, \cite{RW}]
Let $G_\dt$ be a finitely generated $\FI$-graph. Then for each $r \geq 0$, and each $n \gg 0$, the number of walks in $G_n$ of length $r$ agrees with a polynomial.
\end{thm}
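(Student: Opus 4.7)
The plan is to exhibit the set of walks of length $r$ in $G_n$ as the value at $[n]$ of a finitely generated $\FI$-set, and then to invoke the general fact that the cardinality of such an $\FI$-set eventually agrees with a polynomial in $n$.

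Concretely, for each fixed $r \geq 0$ I would define an $\FI$-set $W_r(G_\dt)$ whose value on $[n]$ is the set of walks $(v_0, v_1, \ldots, v_r)$ of length $r$ in $G_n$. Because each structure map $G_\dt(f)$ of the $\FI$-graph is by hypothesis a graph homomorphism, the coordinatewise image of a walk under an $\FI$-morphism $f \colon [n] \into [m]$ is again a walk. Consequently $W_r(G_\dt)$ is naturally a sub-$\FI$-set of the $(r+1)$-fold product $V(G_\dt)^{r+1}$, where $V(G_\dt)$ denotes the vertex $\FI$-set of $G_\dt$.

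Next, finite generation of $G_\dt$ yields finite generation of $V(G_\dt)$ by definition, and a direct orbit-counting argument shows that a product of finitely generated $\FI$-sets is again finitely generated (the generators of a product are obtained by ``merging'' pairs of generators of the factors along all possible overlap patterns, and there are only finitely many such patterns). Invoking the Noetherian property of $\FI$---so that every sub-$\FI$-set of a finitely generated $\FI$-set is finitely generated---we conclude that $W_r(G_\dt)$ is itself finitely generated. The theorem then follows from the standard fact, recalled from \cite{CEF} and embedded in the framework of \cite{RW}, that the counting function $n \mapsto |X_n|$ of a finitely generated $\FI$-set $X_\dt$ agrees with a polynomial for $n \gg 0$.

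The main technical input is the polynomial-cardinality result for finitely generated $\FI$-sets; the reformulation of walks as a sub-$\FI$-set and the closure of finite generation under products and passing to sub-$\FI$-sets are formal. The only bookkeeping step that demands care is controlling the number of generators of the product $V(G_\dt)^{r+1}$, which is where the polynomial degree (growing with $r$) is ultimately read off.
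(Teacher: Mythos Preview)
Your argument is correct and is precisely the template used in \cite{RW}. Note that the present paper does not reproduce a proof of this theorem; it is quoted from \cite{RW} as background. That said, the paper does deploy the identical mechanism in its proof of Proposition~\ref{relpoly}: one packages the combinatorial objects of interest (there, triples $(x,z,y)$ satisfying certain orbit constraints) as a sub-$\FI$-set of a finite product of finitely generated $\FI$-sets, invokes the Noetherian property (Theorem~\ref{mainstructurethm}(1)) to get finite generation of the subset, and then applies polynomial stability (Theorem~\ref{mainstructurethm}(2)). Your proof of the walk-counting theorem is the same argument with $W_r(G_\dt) \subseteq V(G_\dt)^{r+1}$ in place of $X_\dt \subseteq Z_\dt^3$, so there is nothing to add or correct.
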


Seeing such asymptotic regularity in the total count of walks of a given length, one might be tempted to ask the following somewhat vague question: \emph{do the standard statistics of random walks on $G_n$ exhibit similar regularity as $n \rightarrow \infty$?} The purpose of this work is to make this question precise, and attempt to answer it.

\subsection{Hitting times}
Let $X_t$ denote a \emph{Markov chain} on a state space $\mathcal{X}$ (see Definition \ref{markovdef}). For the sake of concreteness, we want to think about $\mathcal{X}$ as being the vertex set of some graph, while $X_t$ is a random walk on this graph (see Definition \ref{graphdef}). Then given any two elements $x$ and $y$ of $\mathcal{X}$, the \emph{hitting time} from $x$ to $y$ is denoted by $\tau_{x,y}$. This is the random variable whose value is the number of steps taken for a random walk beginning at $x$ to reach $y$. Understanding the behavior of $\tau_{x,y}$ is an important problem in the study of Markov chains --- see for instance Chapter $10$ of \cite{LPW}.

Returning to the context of the previous section, we first need to make precise what we mean by hitting times on an $\FI$-graph $G_\dt$. Let $m \gg 0$ be fixed, and let $x,y$ be two vertices of $G_m$. Then for each $n \geq m$, we define $x(n)$ and $y(n)$ to be the vertices
\[
x(n) := G(\iota_{m,n})(x), y(n) := G(\iota_{m,n})(y),
\]
where $\iota_{m,n}:[m] \hookrightarrow [n]$ is the standard inclusion of sets, and $G(\iota_{m,n}):G_m \rightarrow G_n$ is the induced map. For each $n \geq 0$, let $X_{t,n}$ denote the Markov chain modeling the simple random walk on $G_n$. Then we obtain a sequence of random variables by setting,
\[
\tau_{x,y}(n) := \tau_{x(n),y(n)}
\]
Concretely, we think of $x(n)$ and $y(n)$ as being the ``same'' vertices as $x$ and $y$, respectively, just living in a bigger graph containing $G_m$. With this notation, the following will be proven during the course of this work.

\begin{thmab}
Let $G_\dt$ denote a finitely generated $\FI$-graph, let $X_{t,\dt}$ denote the family of Markov chains modeling the simple random walk on $G_\dt$, and let $x,y$ be vertices of $G_m$ for some $m \gg 0$. Then for all $n \geq m$, and all $i \geq 1$ the function
\[
n \mapsto \mu_i(\tau_{x,y}(n))
\]
agrees with a rational function, where $\mu_i$ is any of the $i$-th moment, the $i$-th central moment, or the $i$-th cumulent.
\end{thmab}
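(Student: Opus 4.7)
\emph{Proof plan.} The strategy is to project the walk $X_{t,n}$ onto a much smaller Markov chain whose state space stabilizes as $n$ grows and whose transition probabilities are visibly rational functions of $n$. Let $H_n\leq\Sn_n$ denote the stabilizer of $y(n)$; since $y\in V(G_m)$, $H_n$ contains $\Sn_{\{m+1,\ldots,n\}}$. Because the transition matrix $P_n$ of $X_{t,n}$ is $H_n$-equivariant, for any $H_n$-orbit $O'\subseteq V(G_n)$ and any vertex $v$, the quantity $|N_{G_n}(v)\cap O'|/d_{G_n}(v)$ depends only on the orbit $O\ni v$. Writing $\Omega(n)$ for the set of $H_n$-orbits on $V(G_n)$, we therefore obtain a lumped quotient Markov chain on $\Omega(n)$ with transition matrix
\[
Q_n(O,O') \;:=\; |N_{G_n}(v)\cap O'|/d_{G_n}(v), \qquad v\in O.
\]
Since $H_n$ fixes $y(n)$ pointwise, its orbit is the singleton $\{y(n)\}$, and hence $\tau_{x,y}(n)$ agrees in distribution with the hitting time in the orbit chain from $[x(n)]$ to $\{y(n)\}$.

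To analyze this orbit chain I would invoke the $\FI$-graph machinery of \cite{RW,RSW} to establish, for $n\gg 0$: (i) $\Omega(n)$ is in canonical bijection with a fixed finite set $\Omega$; (ii) each orbit size $|O(n)|$ is polynomial in $n$; (iii) each degree $d_{G_n}(v)$ for $v\in O(n)$ is polynomial in $n$; and (iv) each orbit-edge count $|N_{G_n}(v)\cap O'(n)|$ is polynomial in $n$. Statements (i)--(iii) are consequences of finite generation, and (iv) follows by observing that directed edges between two orbit types themselves form a finitely generated $\FI$-set, together with the identity $|E_{O\to O'}(n)| = |O(n)|\cdot|N_{G_n}(v)\cap O'(n)|$ for $v\in O$. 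Granting (i)--(iv), every entry of $Q_n$ is a rational function of $n$, and $Q_n$ has fixed dimensions for $n\gg 0$.

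The moment computation is now routine first-step analysis on a bounded-size matrix. Assuming $x\neq y$ (the case $x=y$ being trivial), let $Q_n^{(y)}$ denote the submatrix of $Q_n$ indexed by $\Omega\setminus\{y\}$, let $q_y^{(n)}$ denote the column of transition probabilities from $\Omega\setminus\{y\}$ into $\{y(n)\}$, and set $F^{(n)}_{x,y}(z):=\mathbb{E}[z^{\tau_{x,y}(n)}]$. A standard one-step decomposition yields
\[
F^{(n)}_{x,y}(z) \;=\; z\cdot \mathbf{e}_{[x(n)]}^{\,T}\,\bigl(I - z\,Q_n^{(y)}\bigr)^{-1}\, q_y^{(n)}.
\]
This is a rational function of $z$ whose coefficients are rational functions in the entries of $Q_n$, hence rational in $n$. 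The $i$-th factorial moment of $\tau_{x,y}(n)$ is $\frac{d^i}{dz^i}F^{(n)}_{x,y}(z)\bigl|_{z=1}$, which is again rational in $n$. Since ordinary moments, central moments, and cumulants are universal polynomial expressions in factorial moments, the theorem follows in each case.

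The main technical obstacle is the orbit-stabilization step, particularly items (i) and (iv). One must control the $H_n$-orbit structure on $V(G_n)$ rather than the coarser $\Sn_n$-orbit structure, and must handle the finer bivariate statistic of edges between two prescribed orbit types. This is precisely a relative finite-generation statement for the restriction of $G_\dt$ along the pointwise stabilizer of $[m]$, of the sort encoded in the refinements of \cite{RSW}. Once these enumerations are known to be polynomial in $n$, the remaining linear algebra on a matrix of bounded size is essentially formal.
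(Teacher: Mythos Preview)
Your reduction is exactly the paper's: the $H_n$-orbits on $V(G_n)$ with $H_n=\mathrm{Stab}_{\Sn_n}(y(n))$ are precisely what the paper calls \emph{$y$-roofed orbits}, namely the $\Sn_n$-orbits on $V(G_n)\times V(G_n)$ admitting a representative of the form $(z,y(n))$, and your lumped chain is the paper's roofed orbit walk $(X^y_{t,n},P^y_n)$. Your items (i)--(iv) are established there by the same $\FI$-set counting arguments you sketch (orbit stabilization for $Z_\dt\times Z_\dt$ together with the polynomiality of the structure constants $p_{\Ob_1,\Ob_2,\Ob}(n)$), so the heart of the argument coincides.

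Where you diverge is in extracting the moments from the finite lumped chain. The paper solves the linear system $(I-P^y|_{\widetilde V})Q=\mathbf{1}$ over $\RR(n)$ for the expected hitting times, and then climbs inductively using the Law of Total Cumulance: the $i$-th cumulant satisfies the same type of linear recursion with an inhomogeneous term built from lower cumulants, already known to be rational. Your probability-generating-function route instead reads off all factorial moments at once as derivatives of $z\mapsto z\,\mathbf{e}_{[x]}^{T}(I-zQ_n^{(y)})^{-1}q_y^{(n)}$ at $z=1$. Both are standard and correct; yours is a bit more uniform in $i$, while the paper's makes more visible the role of connectedness (that $1$ is not an eigenvalue of the restricted matrix, which in your language is exactly the invertibility of $I-Q_n^{(y)}$ needed for the derivatives at $z=1$ to make sense).
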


The simplest case of the above theorem implies that the expected value of the hitting time random variable between two vertices is eventually equal to a rational function. This case is particularly relevant, as it implies similar conclusions about the so-called \textbf{discrete Green's functions}.

In \cite{CY}, Chung and Yau introduce what they call the discrete Green's functions of a graph. Pulling inspiration from the more classical setting of partial differential equations, the discrete Green's function can be thought of as a partial inverse to the Laplacian of the graph (see Definition \ref{laplace}). Since their introduction, discrete Green's functions have found a variety of applications through algebraic combinatorics (see \cite{E} or \cite{XY}, for instance). One recurring themes in these works, however, is that they are surprisingly difficult to compute. While our work does not explicitly compute any Green's functions, we can use known connections between their values and hitting times to prove the following.

\begin{corab}
Let $G_\dt$ be a finitely generated $\FI$-graph, and for each $n$ let $\Gre_n$ denote the discrete Green's function of $G_n$. Then for any vertices $x,y$ of $G_m$, with $m \gg 0$, and any $n \geq 0$, the function
\[
n \mapsto \Gre_n(x(n),y(n))
\]
agrees with a function of finite degree over $\Q(n)$.
\end{corab}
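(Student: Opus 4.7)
The plan is to reduce the corollary to Theorem A via the Chung--Yau identity relating the discrete Green's function to expected hitting times. Recall from \cite{CY} that, for the normalized Laplacian, one has an identity of the form
\[
E[\tau_{x,y}] = \mathrm{vol}(G)\left(\frac{\Gre(y,y)}{d_y} - \frac{\Gre(x,y)}{\sqrt{d_x d_y}}\right),
\]
where $d_x, d_y$ are the vertex degrees and $\mathrm{vol}(G) = \sum_v d_v$. Writing this identity for the pair $(x,y)$, for the pair $(y,x)$, and for the diagonal case $x=y$, one may algebraically solve for $\Gre(x,y)$ as a rational expression in the hitting-time expectations, the degrees $d_x, d_y$, and $\mathrm{vol}(G)$, with the only non-rational factor being $\sqrt{d_x d_y}$.

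I would then feed in three asymptotic facts. First, Theorem A with $i=1$ gives that $n \mapsto E[\tau_{x,y}(n)]$ and $n \mapsto E[\tau_{y,x}(n)]$ agree with rational functions of $n$ for $n \gg 0$. Second, by the structural results of \cite{RW} on finitely generated $\FI$-graphs, the vertex degrees $d_{x(n)}$ and $d_{y(n)}$ agree with polynomials in $n$ for $n \gg 0$. Third, the volume $\mathrm{vol}(G_n) = 2|E(G_n)|$ is polynomial in $n$ as well, being precisely (twice) the count of walks of length one, which falls under the theorem of \cite{RW} quoted in the introduction. Substituting these into the inverted Chung--Yau identity expresses $\Gre_n(x(n),y(n))$ as a rational function of $n$ divided by $\sqrt{d_{x(n)}\, d_{y(n)}}$. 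Since $d_{x(n)} d_{y(n)}$ is a polynomial in $n$, this square root is algebraic of degree at most two over $\Q(n)$, and hence $\Gre_n(x(n),y(n))$ itself lies in an extension of $\Q(n)$ of degree at most two, matching the claim that it has finite degree over $\Q(n)$.

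The main obstacle is not conceptual but bookkeeping: one must carry out the inversion of the Chung--Yau system cleanly, and in particular must handle the diagonal entry $\Gre(y,y)$ appearing on the right-hand side. This can be dispatched by running the same argument for $x = y$, where the hitting time degenerates to a return time whose expectation is still governed by Theorem A --- indeed, for a reversible simple random walk, the expected return time equals $\mathrm{vol}(G)/d_y$, which is already rational in $n$ by the polynomial asymptotics of degrees and volumes. Once the inversion is written out explicitly, the conclusion of the corollary is immediate from assembling these pieces; no further structural input beyond Theorem A and the polynomiality results of \cite{RW} is needed.
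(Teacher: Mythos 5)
Your overall strategy --- reducing the corollary to the hitting-time result via a Chung--Yau identity, and pinning down the non-rational content as the factor $\sqrt{d_{x(n)} d_{y(n)}}$ --- is exactly the paper's. However, the specific execution has a gap. The paper invokes \cite[Theorem 8]{CY}, which \emph{directly} expresses the Green's function in terms of hitting times:
\[
\Gre(x,y) = \frac{\sqrt{d_x d_y}}{\mathrm{vol}}\left( Q(x,y) - \frac{1}{\mathrm{vol}}\sum_{z \in V} d_z\, Q(z,y)\right),
\]
so no inversion is needed. You instead start from the forward identity
\(
E[\tau_{x,y}] = \mathrm{vol}\bigl(\Gre(y,y)/d_y - \Gre(x,y)/\sqrt{d_x d_y}\bigr)
\)
and propose to solve the system obtained from $(x,y)$, $(y,x)$, and $x=y$ for $\Gre(x,y)$. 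That system is underdetermined: the $x=y$ instance reads $0=0$ and carries no information, leaving two equations (with $\Gre(x,y)=\Gre(y,x)$) in the three unknowns $\Gre(x,y)$, $\Gre(x,x)$, $\Gre(y,y)$. Your proposed fix --- using that the expected return time is $\mathrm{vol}/d_y$ --- does not help, since the return time identity is independent of $\Gre(y,y)$ and provides no further equation. What is actually needed is the normalization constraint $\Gre P_0 = 0$ (or, equivalently, the extra term $\sum_z d_z Q(z,y)$ that Theorem 8 packages in). Once you substitute the direct formula, you also need to check that $\sum_z d_z\, Q(z,y(n))$ is rational in $n$; this follows by grouping the $z$ by orbit under the stabilizer of $y(n)$, since each orbit contributes a polynomial count times a rational hitting time (Theorem \ref{hittingthm}). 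With that replacement, your argument closes, and the $\sqrt{d_{x(n)} d_{y(n)}}$ factor gives degree at most two over $\QQ(n)$ as you observed.
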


Note that the word degree in the above corollary is used in the Galois-theoretic sense. In fact, to refine the above, we can think of this function as being some algebraic combination of rational functions and square roots of rational functions.

\subsection{Mixing times}

Another natural question to ask about a Markov chain is what happens when it is run for a long time. Under mild conditions on the chain, the answer is that it approaches a unique stationary distribution. Thus, the question becomes ``How fast does the chain approach the stationary distribution?''. The \emph{mixing time} is a measure of this time taken --- see Definition \ref{mixdef} and surrounding parts of Section \ref{sec:markovbackground}. Essentially, the mixing time is the time taken for the Markov chain in question to get within a certain distance of its stationary distribution, from any starting state. More details on mixing times can be found in, for example, \cite{LPW}.

Given a family of Markov chains indexed by $n$, one may ask how the mixing times depend on $n$. In this paper, we will be interested in random walks on \fg s. If $G_\dt$ is an \fg, then we will examine how the mixing times of various random walks on $G_n$ depends on $n$. We will find that for a certain family of walks where the transition probabilities depend on the orbit sizes in an appropriate way (Definition \ref{def:weightedwalk}), the mixing time is constant in $n$.

\begin{thmab}
For any finitely generated \fg{} which is eventually not bipartite, the weighted walk on $G_n$ has mixing time which is constant in $n$, in the sense of Remark \ref{rem:mixingeps}.
\end{thmab}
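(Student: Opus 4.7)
The plan is to exploit the $\Sn_n$-equivariance of the weighted walk in order to reduce the mixing-time question on the growing family $G_n$ to that of a single fixed finite Markov chain. The first step is to verify directly from Definition \ref{def:weightedwalk} that the transition kernel $P_n$ on $V(G_n)$ commutes with the $\Sn_n$-action, and to identify its stationary distribution $\pi_n$; by construction of the weights, $\pi_n$ is $\Sn_n$-invariant and uniform on each vertex orbit.

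Next I would push the chain down to the orbit quotient: $P_n$ descends to a Markov chain $\bar{P}_n$ on the finite set $\bar{V}_n := V(G_n)/\Sn_n$. The structural theory of finitely generated $\FI$-graphs from \cite{RW} implies that for $n \gg 0$ the canonical maps $\bar{V}_n \to \bar{V}_{n+1}$ are bijections, and the transition matrices $\bar{P}_n$ are independent of $n$ in this range. This produces a single limit chain $\bar{P}$, irreducible by finite generation of $G_\dt$ and aperiodic by the eventually-not-bipartite hypothesis (which passes to the quotient, since the offending odd cycle persists through the $\Sn_n$-action). Consequently $\bar{P}$ has a finite mixing time $\bar{T}$, independent of $n$.

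The core of the argument is to transfer this bound up to the full chain. The weights in Definition \ref{def:weightedwalk} are chosen precisely so that, for each fixed vertex $v$, the one-step distribution $P_n(v,\cdot)$ is \emph{orbit-uniform}: its conditional distribution on any destination orbit $\mathcal{O}$ is proportional to the uniform measure on $\mathcal{O}$. A direct computation, in which the inner sum over all intermediate vertices in a common orbit produces a factor of $|\mathcal{O}|$ that cancels the $1/|\mathcal{O}|$ in the weight, shows that the composition of two orbit-uniform kernels remains orbit-uniform. Iterating, $P_n^{\,t}(v,\cdot)\big|_{\mathcal{O}}$ is uniform on $\mathcal{O}$ for every $t \geq 1$. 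Combined with the orbit-uniformity of $\pi_n$, this yields the identity
\[
\bigl\|P_n^{\,t}(v,\cdot) - \pi_n\bigr\|_{\mathrm{TV}}
= \bigl\|\bar{P}_n^{\,t}(\bar v,\cdot) - \bar{\pi}_n\bigr\|_{\mathrm{TV}},
\]
and hence the mixing time of $P_n$, in the sense of Remark \ref{rem:mixingeps}, equals that of $\bar{P}_n = \bar{P}$ for all $n \gg 0$.

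The main obstacle is the orbit-uniformity claim together with its propagation under iteration. The one-step statement is essentially a reformulation of the weight prescription in Definition \ref{def:weightedwalk}, but writing it out requires carefully tracking the normalization constants in terms of the $\Sn_n$-orbit sizes and the way the weights interact with the edge structure of $G_n$. Once the single-step statement is in place, the propagation step is the brief computation indicated above; the remainder of the proof is then a formal consequence of the stabilization of the quotient chain.
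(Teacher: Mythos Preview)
Your central claim---that the one-step distribution $P_n(v,\cdot)$ is uniform on each destination vertex orbit---is false, and the whole reduction collapses without it. Consider the Johnson graph $J(n,2)$ (Example~\ref{ex:johnsonmixing}): there is a single vertex orbit and a single edge orbit, so the weighted walk is just the simple random walk. From $\{1,2\}$ you step to one of the $2(n-2)$ sets of the form $\{1,k\}$ or $\{2,k\}$, each with probability $\tfrac{1}{2(n-2)}$, while every set like $\{3,4\}$ receives probability zero. Since there is only one vertex orbit, your ``conditional distribution on the destination orbit'' is the full one-step law, and it is manifestly not uniform. Your identity $\|P_n^t(v,\cdot)-\pi_n\|_{\mathrm{TV}} = \|\bar P_n^t(\bar v,\cdot)-\bar\pi_n\|_{\mathrm{TV}}$ would then say the Johnson walk mixes in zero steps (the quotient chain has a single state), contradicting the $r\ln r$ bound computed in Example~\ref{ex:johnsonmixing}.

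What is actually true is only that $P_n(v,\cdot)$ is invariant under the stabilizer $\mathrm{Stab}_{\Sn_n}(v)$, not under all of $\Sn_n$; this is far weaker and does not propagate to uniformity on full orbits under iteration. The paper's proof confronts exactly this difficulty: after coupling the two chains on the orbit graph (your quotient $\bar P$), one still has to deal with the \emph{vertex labels}, and this is done via a second coupling that tracks a bijection $f_t$ on $[n]$ and waits for each initial label to be ``forgotten'' (Proposition~\ref{prop:constremoval}). The constant bound on mixing time comes from the augmented orbit graph being finite and independent of $n$, not from any uniformity of $P_n(v,\cdot)$ on vertex orbits. Your quotient-chain idea is the correct first move, but it only handles half the problem; the label-mixing half requires a separate argument that your proposal does not supply.
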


We will then see that for walks with other transition probabilities, the mixing times are bounded by how different these transition probabilities are from those in the previous case. (More precisely, the ratio in question is between probability flows $\pi(x)P(x,y)$, not just the transition probabilities $P(x,y)$).

\begin{thmab}
If $G_\dt$ is a finitely generated \fg{} which is eventually not bipartite, then a general model of a random walk on $G_n$ has mixing time bounded above by a constant times $\rho(n)^{-1}$ (Definition \ref{def:rho}). 
\end{thmab}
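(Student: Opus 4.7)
The plan is to compare the general random walk on $G_n$ against the weighted walk from the previous theorem by means of a Dirichlet-form comparison in the spirit of Diaconis and Saloff-Coste. The previous theorem gives us that the weighted walk on $G_n$ has mixing time bounded by a constant independent of $n$, which via standard inequalities translates into a lower bound $\lambda_n^{\text{w}} \geq c > 0$ on the absolute spectral gap of the weighted walk, uniform in $n$. Our goal is then to transfer this spectral information to the general walk.

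The first step is to phrase $\rho(n)$ as the relevant Radon-Nikodym-type quantity between the two probability flows. Writing $Q^{\text{w}}_n(x,y) = \pi^{\text{w}}_n(x) P^{\text{w}}_n(x,y)$ and $Q_n(x,y) = \pi_n(x) P_n(x,y)$ for the flows of the weighted and general walks, respectively, the definition $\rho(n)$ should be, up to a bounded factor, $\min_{x,y} Q_n(x,y)/Q^{\text{w}}_n(x,y)$ taken over pairs with $Q^{\text{w}}_n(x,y) > 0$ (the parenthetical remark in the excerpt strongly suggests this). Since both walks are supported on the same edge set of $G_n$, one does not need to reroute flows along paths; direct comparison suffices. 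Thus for every function $f$ on the vertex set one obtains
\[
\mathcal{E}_n(f,f) \;\geq\; \rho(n) \cdot \mathcal{E}_n^{\text{w}}(f,f),
\]
where $\mathcal{E}_n, \mathcal{E}_n^{\text{w}}$ are the Dirichlet forms of the two chains. A parallel argument controls the variance ratio $\operatorname{Var}_{\pi_n}(f)/\operatorname{Var}_{\pi^{\text{w}}_n}(f)$ by a constant, because the stationary distributions of both walks are supported on the same vertex set and, by the orbit-structure of $G_\dt$, their ratio is bounded in $n$.

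The second step is to combine these comparisons with the Poincaré inequality for the weighted walk to conclude $\lambda_n \geq c' \rho(n)$ for the spectral gap of the general walk. Inserting this into the standard mixing-time/spectral-gap bound
\[
t_{\text{mix}}(\epsilon) \;\leq\; \lambda_n^{-1}\log\!\bigl((\pi_n^{\min})^{-1}/\epsilon\bigr)
\]
produces the desired bound, provided one can absorb the $\log(\pi_n^{\min})^{-1}$ factor. Here one uses that $\pi_n$ is a probability measure on a vertex set whose size is polynomial in $n$ (by the finite-generation hypothesis, via the walk-counting theorem of \cite{RW} recalled in the introduction), so $\log(\pi_n^{\min})^{-1}$ grows at most polynomially in $\log n$ and can be absorbed into the multiplicative constant together with the choice of $\epsilon$ fixed by Remark \ref{rem:mixingeps}.

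The principal obstacle I foresee is not the spectral comparison itself, which is quite standard, but rather ensuring that the comparison between the two stationary distributions is tight enough: one needs the ratio $\pi_n(x)/\pi^{\text{w}}_n(x)$ to be bounded uniformly in $n$ and in $x$, which in turn requires knowing that the orbit sizes appearing in the weighted walk stabilize in an appropriate sense across the $\FI$-graph structure. This is precisely what the results of \cite{RW,RSW} provide, but one has to track the constants through the comparison to confirm that the $n$-dependence truly collapses into the single factor $\rho(n)^{-1}$.
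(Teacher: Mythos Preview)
Your spectral comparison strategy is a reasonable instinct, but it does not reach the stated bound, for two reasons.

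First, and most decisively, even if you obtain a spectral gap estimate $\lambda_n \geq c\,\rho(n)$, the conversion to a mixing-time bound via $t_{\mathrm{mix}}(\epsilon) \leq \lambda_n^{-1}\log\bigl(1/(\epsilon\,\pi_n^{\min})\bigr)$ introduces the factor $\log(1/\pi_n^{\min})$. Since $|V(G_n)|$ is polynomial in $n$, this factor is of order $\log n$; it is a genuine $n$--dependent term and cannot be ``absorbed into the multiplicative constant''. Your appeal to Remark~\ref{rem:mixingeps} misreads that remark: it concerns the dependence on $\epsilon$, not on $n$. The spectral route therefore delivers at best $t_{\mathrm{mix}} = O\bigl(\rho(n)^{-1}\log n\bigr)$, which is strictly weaker than the theorem.

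Second, the claim that $\pi_n(x)/\pi_n^{\mathrm{w}}(x)$ is bounded uniformly in $n$ and $x$ is not justified. A ``general model of a random walk'' in the sense of Definition~\ref{graphdef} is an arbitrary reversible chain supported on the edges of $G_n$; its weights need not respect the $\Sn_n$--action, so the orbit-stabilization results of \cite{RW,RSW} do not apply to $\pi_n$. The definition of $\rho(n)$ gives $\pi_n(x) \geq \rho(n)\,\pi_n^{\mathrm{w}}(x)$, but provides no upper bound on the ratio, and without one the variance comparison constant in the Diaconis--Saloff-Coste scheme can itself grow with $n$.

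The paper avoids both obstacles by not working spectrally on $G_n$ at all. It reruns the coupling argument from Theorem~\ref{the:orbitwalkmixing} verbatim; the only place the change of weights matters is in bounding the time until each label has been discarded. That bound (Proposition~\ref{prop:constremoval2}) is proved by a commute-time/conductance comparison on the \emph{augmented orbit graph}, which has a fixed finite vertex set independent of $n$. On that finite graph the Commute Time Identity shows that passing from the weighted walk's edge weights to the general walk's edge weights inflates commute times by at most a factor $\rho(n)^{-1}$, and there is no $\log(1/\pi_{\min})$ penalty because no spectral-to-mixing conversion is invoked. The reduction to a finite, $n$--independent quotient is the essential idea your approach is missing.
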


\subsection{The scope of this work}

While the introduction has been written in the language of $\FI$-graphs for concreteness, we take a moment here to note that most of the work of this paper will actually be done in a much more general context.

In \cite{RSW}, Speyer and the authors lay the foundation for the study of $\FI$-sets, functors from $\FI$ to the category of finite sets, equipped with relations (see Definition \ref{fiset} and Definition \ref{relation}). $\FI$-graphs fall into this more general framework, as pair of a vertex $\FI$-set and an edge relation, but they are far from the only types of objects that do. For instance, the theory of $\FI$-posets, proposed by Gadish \cite{G}, are also examples.

Our results on hitting times will apply to any reasonable random walk on a finitely generated $\FI$-set (see Definition \ref{virtualrelation} for what is meant here by reasonable). For instance, if one would instead prefer to work with a lazy random walk on an $\FI$-graph, the same results apply. Our results on mixing times are more limited in scope. In particular, they will only apply to random walks on (undirected) $\FI$-graphs, as \emph{reversibility} of the relevant Markov chains plays a key role in the proof (see Definition \ref{reverse}).

\subsection{Interesting open questions}

While this work hopes to lay the foundation for a systematic study of random walks on $\FI$-sets, there is still much that is not well understood. For instance, can anything be said about the relative sizes of the moments of the hitting time random variables? One might hope that the standard deviation would be small relative to the expected value, to obtain a concentration result. We prove that the various moments share a kind of asymptotic regularity, but our methods do not give information on their relative sizes.

Another possible direction of future study is the question of \emph{cut-off}. In his seminal work \cite{D}, Diaconis defined the cutoff phenomenon for families of Markov chains. A family $X_{t,n}$ of Markov chains is said to have cutoff if, for $n \gg 0$, the chain quickly moves from being very far from mixed to very close to mixed. While it is believed that cutoff is common in many natural families of Markov chains, this is notoriously difficult to prove. In many examples where one can actually compute mixing times, random walks on finitely generated $\FI$-graphs exhibit cutoff. However, one can also construct examples where the walk does not have cutoff (see Example \ref{nocutoff}). It would be interesting to know whether one can put natural conditions on the $\FI$-graph so that these random walks have cutoffs.

One reason that we believe that such a condition should exist is related to the main theorem of Speyer and the authors from \cite{RSW}. One consequence of this theorem is that the probability transition matrix of a random walk on a finitely generated $\FI$-graph has very restrictive behavior in its spectrum. Namely, in the case where $G_n$ has a single vertex orbit with respect to the $\Sn_n$-action, it can be shown that the second biggest eigenvalue of the probability transition matrix has multiplicity growing with $n$. Heuristics on the cutoff phenomenon due to Diaconis \cite{D} suggest that these random walks should be expected to have cutoff.

\section*{Acknowledgments}
The authors would like to send their sincere thanks to Thomas Church, Jennifer Wilson, and Zachary Hamaker for useful input during the early stages of this work. The first author was supported by NSF grant DMS-170481

\section{Background}

\subsection{Markov chains and random walks}
\label{sec:markovbackground}

We begin with a review of some theory of Markov chains. Special emphasis will be placed on the notions of hitting time (see Definition \ref{hitdef}) and mixing time (see Definition \ref{mixdef}). The example of random walks on graphs will appear beginning in the next section both as motivation and as a way to ground the material. All of what follows can be found in any standard text on the subject.

\begin{definition}\label{markovdef}
Let $\mathcal{X}$ be a finite set. Then a \textbf{Markov chain} on $\mathcal{X}$ is a family of random variables $\{X_t\}_{t = 0}^\infty$ such that for all $t \geq 0$, and all $(t+1)$--tuples $(x_0,\ldots,x_t) \in \mathcal{X}^{t+1}$,
\begin{enumerate}
\item $\mathbb{P}(X_t = x_t \mid X_{t-1} = x_{t-1}, \ldots , X_{0} = x_0) = \mathbb{P}(X_t = x_t \mid X_{t-1} = x_{t-1})$, and
\item $\mathbb{P}(X_t = x_t \mid X_{t-1} = x_{t-1}) = \mathbb{P}(X_{t-1} = x_t \mid X_{t-2} = x_{t-1}).$
\end{enumerate}
That is, a Markov chain on $\mathcal{X}$ is a procedure for randomly moving between elements of the \textbf{state space} $\mathcal{X}$, such that at each time the probability of moving to any element of $\mathcal{X}$ depends only on the current state. 

The information necessary to define a Markov chain is the state space $\X$ and the collection of transition probabilities --- the probabilities of moving from any state to any other. These probabilities are collected in the \textbf{transition matrix}, whose $(i,j)$--entry is the probability of moving from state $i$ to state $j$ in a single step. If $a,b \in \X$ are such that $P(a,b) > 0$, then we say that $b$ is a \textbf{neighbor} of $a$.

The transition matrix can also be seen as the endomorphism $P:\RR \mathcal{X} \rightarrow \RR \mathcal{X}$ of the $\RR$-linearization of $\mathcal{X}$ given entry-wise by
\[
P(x,y) = \mathbb{P}(X_t = y \mid X_{t-1} = x).
\]
We say that a Markov chain $\{X_t\}_t$ on $\mathcal{X}$ is \textbf{connected} or \textbf{irreducible} if for any pair of states $x,y \in \mathcal{X}$ there is some $t > 0$ such that
\[
P^t(x,y) > 0
\]
The matrix $P$ is independent of the choice of initial distribution $\mathbb{P}(x) := \mathbb{P}(X_0 = x)$. We will usually interpret a choice of initial distribution as a row vector in $\RR \mathcal{X}$ whose coordinates sum to 1. A \textbf{stationary distribution of a Markov chain} is a choice of initial distribution $\pi$ having the property that $\pi \cdot P = \pi$.

When the state space $\mathcal{X}$ is understood, we will often write $(X_t,P)$ to denote the pair of a Markov chain and its associated transition matrix.
\end{definition}

One natural question that one might ask about a connected Markov chain $(X_t,P)$ on a state space $\mathcal{X}$ is the time the chain will take to move between two chosen points. This quantity is a hitting time. That is, if the Markov chain is started in state $x$, then consider the first time at which it moves to state $y$. This random variable is the hitting time of $y$ from $x$. If the Markov chain is connected and finite, then the hitting times are almost surely finite. 

\begin{definition}\label{hitdef}
Let $(X_t,P)$ be a connected Markov chain on a state space $\mathcal{X}$. Then for any $x,y \in \mathcal{X}$, we define the \textbf{hitting time from $x$ to $y$} as
\[
\tau_{x,y} := \min\{t \mid X_t = y, X_0 = x\}.
\]
We will often work with the expected value
\[
Q(x,y) := \mathbb{E}(\tau_y \mid X_0 = x)
\]
\end{definition}

One useful fact for computing hitting times is the following recursive formulation, derived by considering the various possibilities for the first step from the state $x$.

\begin{lemma}\label{hitrecursion}
Let $(X_t,P)$ be a connected Markov chain on a state space $\mathcal{X}$. Then for any distinct $x,y \in \mathcal{X}$ one has,
\[
Q(x,y) = 1 + \sum_{z \in \mathcal{X}} P(x,z)Q(z,y)
\]
\end{lemma}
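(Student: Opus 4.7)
The plan is to prove this by a standard first-step analysis, conditioning on the state $X_1$ after the chain takes its initial step from $x$. Since $x \neq y$, the minimum time $t$ at which $X_t = y$ is at least $1$, so I would decompose $\tau_{x,y} = 1 + \sigma$, where $\sigma$ denotes the number of additional steps required to reach $y$ after time $1$. The Markov property (part (1) of Definition \ref{markovdef}), together with the time-homogeneity asserted in part (2), implies that conditional on $X_1 = z$, the shifted process $(X_{t+1})_{t \geq 0}$ is itself a Markov chain with the same transition matrix $P$, now started at $z$. Therefore the conditional distribution of $\sigma$ given $\{X_0 = x, X_1 = z\}$ coincides with the unconditional distribution of $\tau_{z,y}$, and its conditional expectation equals $Q(z,y)$. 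The case $z = y$ is handled by the convention $Q(y,y) = 0$, which follows directly from $\tau_{y,y} = \min\{t \mid X_t = y,\, X_0 = y\} = 0$ (the walk has already hit $y$ at its first step).

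It then remains to take expectations and sum over the possible values of $X_1$:
\[
Q(x,y) \;=\; \mathbb{E}(\tau_{x,y} \mid X_0 = x) \;=\; 1 + \sum_{z \in \mathcal{X}} \mathbb{P}(X_1 = z \mid X_0 = x)\, \mathbb{E}(\sigma \mid X_0 = x, X_1 = z) \;=\; 1 + \sum_{z \in \mathcal{X}} P(x,z)\, Q(z,y).
\]
I do not anticipate any real obstacle here, since the argument is a textbook first-step decomposition. The only point meriting any care is ensuring that $Q(x,y)$ is finite so that the interchange of expectation and summation above is legitimate; but this is guaranteed by the standing hypotheses, namely that $(X_t,P)$ is a connected Markov chain on a finite state space, which forces $\tau_{x,y}$ to have geometrically decaying tails and hence finite moments of all orders.
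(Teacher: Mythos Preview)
Your proposal is correct and matches the paper's approach exactly: the paper does not give a detailed proof but simply notes that the formula is ``derived by considering the various possibilities for the first step from the state $x$,'' which is precisely the first-step conditioning you carry out. Your added remarks on the convention $Q(y,y)=0$ and on finiteness of $Q(x,y)$ are appropriate details filling in what the paper leaves implicit.
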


A natural question about Markov chains is what their long-term behavior is like. For nice enough chains, the answer is 
that they get closer and closer to their stationary distribution. We will now make this statement precise. 

We will later discuss the mixing time of a Markov chain, which relates to when $t$ is large enough so that the probability distribution associated to $X_t$ is sufficiently similar to the stationary distribution (see Definition \ref{mixdef}). The following theorem guarantees that not only does the stationary distribution exist, it is also necessarily unique. 

\begin{theorem}[Proposition 1.14 and Corollary 1.17 of \cite{LPW}]
Let $(X_t,P)$ be a connected Markov chain on a state space $\mathcal{X}$. Then there exists a unique distribution $\pi$ such that $\pi \cdot P = \pi$.
\end{theorem}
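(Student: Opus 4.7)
The plan is to establish existence via a soft compactness argument applied to Cesàro averages of powers of $P$, and then prove uniqueness by a support analysis that uses connectedness of the chain in an essential way. Equivalently, both halves follow at once from the Perron--Frobenius theorem for irreducible non-negative matrices, but the elementary approach is more transparent and I would present that one.

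For existence, I would begin by noting that each row of $P$ sums to $1$, so $P \mathbf{1} = \mathbf{1}$, and hence $1$ is an eigenvalue of $P^T$. To produce an honest probability distribution fixed by $P$ (rather than just any $1$-eigenvector of $P^T$), fix any initial distribution $\mu_0$ and form the Cesàro averages
\[
\nu_N \;=\; \frac{1}{N}\sum_{t=0}^{N-1} \mu_0 P^t.
\]
Each $\nu_N$ lies in the probability simplex on $\mathcal{X}$, which is compact because $\mathcal{X}$ is finite, so some subsequence $\nu_{N_k}$ converges to a limit $\pi$. The telescoping identity $\nu_N P - \nu_N = \tfrac{1}{N}(\mu_0 P^N - \mu_0)$ has right-hand side of norm at most $2/N$, which tends to zero; passing to the limit along $N_k$ gives $\pi P = \pi$, and $\pi$ remains a probability distribution by continuity.

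For uniqueness, suppose $\pi$ and $\pi'$ are both stationary, and set $\sigma = \pi - \pi'$, so $\sigma P = \sigma$ and $\sum_x \sigma(x) = 0$. Let $A = \{x \in \mathcal{X} : \sigma(x) > 0\}$. For each $x \in A$, stationarity gives
\[
\sigma(x) \;=\; \sum_{y \in \mathcal{X}} \sigma(y) P(y,x) \;\leq\; \sum_{y \in A} \sigma(y) P(y,x),
\]
with equality only if $P(y,x) = 0$ for every $y \notin A$. Summing over $x \in A$ and comparing with $\sum_{y \in A} \sigma(y) = \sum_{y \in A}\sigma(y)\sum_{x \in \mathcal{X}} P(y,x)$ forces equality, and hence $P(y,x) = 0$ whenever $y \notin A$ and $x \in A$. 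Iterating gives $P^t(y,x) = 0$ for all $t \geq 1$, contradicting connectedness as soon as $A$ is a proper non-empty subset of $\mathcal{X}$. So $A$ is either empty or all of $\mathcal{X}$; the latter is ruled out by $\sum_x \sigma(x) = 0$, so $\sigma \leq 0$, and the symmetric argument gives $\sigma \geq 0$, hence $\pi = \pi'$.

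The main obstacle is the uniqueness step: existence is essentially free from compactness once one has $P\mathbf{1} = \mathbf{1}$, but uniqueness genuinely requires the combinatorial input from connectedness. The delicate point is converting the sign/support structure of $\sigma$ into an invariant subset for $P$, which is exactly where irreducibility enters to rule out such proper invariant sets.
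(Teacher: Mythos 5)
This theorem is not proved in the paper; it is cited verbatim from Levin--Peres--Wilmer as background, so there is no proof of record to compare against. Your proposal is a self-contained argument of the standard type. The existence half (Ces\`aro averaging of $\mu_0 P^t$ and extracting a convergent subsequence in the compact probability simplex, then passing to the limit in the telescoping identity) is correct and clean.

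The uniqueness half has a genuine slip in the support analysis. From the pointwise inequality
\[
\sigma(x) \;=\; \sum_{y\in A}\sigma(y)P(y,x) + \sum_{y\notin A}\sigma(y)P(y,x) \;\le\; \sum_{y\in A}\sigma(y)P(y,x),
\]
equality forces only $\sigma(y)P(y,x)=0$ for each $y\notin A$, not $P(y,x)=0$: the inequality is insensitive to transitions out of $\{y:\sigma(y)=0\}$. So the asserted invariance ``$P(y,x)=0$ whenever $y\notin A$ and $x\in A$'' does not follow as written. The repair is to look instead at the \emph{second} inequality in your chain, $\sum_{y\in A}\sigma(y)\sum_{x\in A}P(y,x) \le \sum_{y\in A}\sigma(y)$. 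Since $\sigma(y)>0$ for every $y\in A$, equality here forces $\sum_{x\in A}P(y,x)=1$ for all $y\in A$, i.e.\ $P(y,x)=0$ whenever $y\in A$ and $x\notin A$. That makes $A$ an absorbing set, which contradicts irreducibility whenever $A$ is nonempty and proper, and the rest of your argument (ruling out $A=\mathcal{X}$ using $\sum_x\sigma(x)=0$, then running the symmetric bound) goes through unchanged. An alternative standard uniqueness route, sidestepping sign analysis entirely, is to first observe that any stationary $\pi$ of an irreducible chain is strictly positive, then minimize $\pi(x)/\pi'(x)$ and propagate the minimum along transitions; that yields proportionality directly and may be slightly cleaner to present.
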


\begin{remark}\label{righteigen}
Because it will be useful later, we note that standard facts in linear algebra imply that if $(X_t,P)$ is a connected Markov chain, then the subspace generated by $\mathbf{1} := \sum_{x \in \mathcal{X}} x$ is the (right) eigenspace of $P$ associated to the eigenvalue 1. In particular, all (right) eigenvectors of $P$ associated to 1 are supported on every element of $\mathcal{X}$.
\end{remark}

\begin{definition}\label{reverse}
If $(X_t,P)$ is a connected Markov chain on a state space $\mathcal{X}$ with stationary distribution $\pi$, then we say $(X_t,P)$ is \textbf{reversible} if for all $x,y \in \mathcal{X}$
\[
\pi(x)P(x,y) = \pi(y)P(y,x).
\]
\end{definition}

Reversibility essentially entails that a Markov chain can be reversed and remain a Markov chain. This condition will be useful to us when studying random walks on graphs (see Definition \ref{graphdef}). Reversibility may also be seen as saying that if a step is taken from the stationary distribution $\pi$, then for each edge $(x,y)$, equal probability `flows' from $x$ to $y$ and from $y$ to $x$. 

In order to talk about Markov chains approaching their stationary distributions, we will need to be able to measure the distance between distributions. 

\begin{definition}
If $\mu$ and $\nu$ are two probability distributions on a set $\mathcal{X}$, then the \textbf{total variation distance} between $\mu$ and $\nu$ is the maximum value of $\mu(A) - \nu(A)$ over all events $A \subseteq \mathcal{X}$. Equivalently (for the finite chains we will consider), it is equal to the sum $$\sum_{x \in \mathcal{X}} \frac{1}{2}\left|\mu(x) - \nu(x)\right|.$$  
\end{definition}

The total variation distance is essentially the $L_1$ distance between two probability distributions. One can also use other measures of distance. For our purposes, the choice of distance function is not terribly important. 

\begin{Theorem}[Theorem 4.9 of \cite{LPW}]
Let $P$ be a Markov chain which is irreducible and aperiodic, with stationary distribution $\pi$. Then there exist constants $\al \in (0,1)$ and $C > 0$ so that for any starting state and any time $t$, the distance of the distribution after $t$ steps of $P$ from the stationary distribution $\pi$ is at most $C\al^t$. 
\end{Theorem}

This theorem requires that the Markov chain in question be irreducible --- that it is possible to get from any state to any other state, and also that it be aperiodic --- that it is not the case that all paths from a state to itself have even length, or length a multiple of any other period. 

\begin{definition} \label{mixdef}
Let $P$ be an irreducible and aperiodic Markov chain on the state space $\mathcal{X}$, and $\eps$ be any positive constant. The \textbf{mixing time} $\tme$ is the smallest time so that for any starting state $x \in \mathcal{X}$, the distribution after $\tme$ steps is within $\eps$ of the stationary distribution $\pi$. 
\end{definition}

Because the distance from stationarity decays exponentially, the mixing times $t_{\text{mix}(\eps_1)}$ and $t_{\text{mix}(\eps_2)}$ have the same order for any $\eps_1$ and $\eps_2$ which are both less than $\frac{1}{2}$. That is, their ratio depends only on the values of $\eps_1$ and $\eps_2$, not on other parameters. This means that it makes sense for us to claim that a family of Markov chains indexed by $n$ have mixing times which are constant in $n$, or at most quadratic in $n$, for example, without needing to specify $\eps$. 

\begin{Remark}
\label{rem:mixingeps}
Given a family of Markov chains indexed by $n$, we will sometimes want to say things like `These chains mix in a single step', or `These chains mix in five steps'. Statements like these should be understood to mean that for any $\eps$, there exists $N$ so that for all $n > N$, the claimed bound is true of $\tme$.  
\end{Remark}

Our main tool for bounding mixing times will be coupling.

\begin{Definition}
A coupling of two Markov chains is a probability distribution on pairs of evolutions of each chain, such that the marginal distributions are the same as the distributions of the original chains. That is, we run a copy of each chain, and are allowed to introduce arbitrary correlations between the two. 
\end{Definition}

The explicit couplings constructed in Section \ref{sec:mixingpf} will mostly be Markovian, which means that what one chain does between time $t-1$ and $t$ depends only on what the other chain is doing at the same time, not on the history of the other chain.  

Given a Markov chain, any coupling between two copies of the chain gives a bound on the mixing time. The bound depends on the time taken for the two chains to be in the same state as one another (and will only be useful if the coupling is designed to achieve this). For any such coupling we can (and will) decree that once the two chains are in the same state, they will move in the same way.

\begin{Theorem}[Theorem 5.4 of \cite{LPW}]
\label{the:couplingmixing}
Let $P$ be a Markov chain on the state space $\mathcal{X}$, and $\eps$ be any positive constant. If there is a coupling of two copies of $P$ so that when two chains are started in any two states and run according to this coupling, after $t$ steps there is at least a probability of $(1 - \eps)$ that the two chains are in the same state, then the distribution after $t$ steps from any starting position is within $\eps$ of the stationary distribution $\pi$. Therefore the mixing time $\tme$ is at most $t$. 
\end{Theorem}

Thus, when we want to bound the mixing time of random walks on \fg s in Section \ref{sec:mixingpf}, we will construct couplings between two random walks on the same graphs, aiming for them to meet as soon as possible and thence move together.

\subsection{Discrete Green's functions}
One of the most interesting class of examples of Markov chains are random walks on graphs. In this paper we will use random walks on $\FI$-graphs as guiding examples for some of the more abstract results we obtain.

We begin this section by establishing the graph theoretic notation to be used in the remainder of the paper. This notation is mostly chosen to be consistent with \cite{CY}.

\begin{definition}
A \textbf{graph} is a pair $G = (V,E)$, where $V$ is a finite set of \textbf{vertices} $V$, paired with a finite set of \textbf{edges} $E \subseteq \binom{V}{2} \cup V$. Those edges $e \in E \cap V$ are known as \textbf{loops}. A graph will be called \textbf{simple} if it does not have any loops. In situations where we are concerned with more than a single graph, we will often write $V(G)$ and $E(G)$ to denote the vertex and edge sets of $G$, respectively. The \textbf{endpoints} of an edge $e \in E(G)$ are the vertices of $G$ which define $e$; either a pair of vertices or a single vertex in the case where $e$ is a loop.

\emph{In this paper we will assume without further notice that all graphs are connected in the usual sense.}

The \textbf{degree of a vertex} is defined to be
\[
\mu(v) := |\{e \in E(G) - V(G) \mid \text{ $v$ is an endpoint of $e$}\}| + 2|\{E(G) \cap V(G)\}|
\]

A \textbf{homomorphism} of graphs $\phi:G \rightarrow G'$ is a map between vertex sets $\phi:V \rightarrow V'$ such that if $e \in E(G)$ is an edge of $G$, then $\phi(e)$ is an edge of $G'$.

A \textbf{subgraph} of a graph $G$ is a graph $H$ such that $V(H) \subseteq V(G)$ and $E(H) \subseteq E(G)$. We say that $H$ is an \textbf{induced subgraph} of $G$ if it is a subgraph such that if $e \in E(G)$ is any edge of $G$ with endpoints in $V(H)$, then $e \in E(H)$.
\end{definition}

\begin{remark}
It will also be convenient to think of a graph as a pair of a finite set $V$ with a symmetric relation $E \subseteq V \times V$. We will use these two descriptions of a graph interchangeably in what follows.

Further note that our definition of graph permits loops. In the context of this paper, this is done as a convenience so that one can consider random walks wherein it is possible to stay fixed at a vertex (see Definition \ref{graphdef}). We will usually not meaningfully distinguish a graph from that same graph with loops added or removed.
\end{remark}

\begin{example}\label{smallex}
Some examples of graphs that we will see throughout this work include:
\begin{itemize}
\item the complete graphs $K_n$. These are those graphs with $V(K_n) = [n]$ and edge set $E(K_n) = \binom{[n]}{2}$;
\item the Kneser graphs $KG(n,r)$. These are those graphs with $V(KG(n,r)) = \binom{[n]}{r}$ and whose edge relation indicates the two chosen subsets are disjoint;
\item the Johnson graphs $J(n,r)$. These are those graphs with $V(J(n,r)) = \binom{[n]}{r}$ and whose edge relation indicates the two chosen subsets have intersection of size $r-1$;
\item Crown graphs $C(n,r)$. These are those graphs with $V(C(n,r)) = [n]  \bigsqcup [n]$ and whose edge relation is given by $\{((i,1),(j,2)), ((i,2),(j,1)) \mid i \neq j\}$.
\end{itemize}
\end{example}

Many of the techniques used in this paper can be said to live at the interface of graph theory and algebra. To accomplish this fusion, we often associate to graphs certain vector spaces which are designed to encode combinatorial invariants. For now, we will be concerned with two particular examples of this perspective.

\begin{definition}\label{laplace}
A \textbf{weighted graph} is a graph $G = (V,E)$ paired with a function $w:V \times V \rightarrow \RR_{\geq 0}$ such that for all $x \neq y \in V$:
\begin{enumerate}
\item $w_{x,y} = w_{y,x}$;
\item $w_{x,y} = 0$ if and only if $\{x,y\} \notin E$.
\end{enumerate}
Given a weighted graph, we define the \textbf{weighted degree} of a vertex to be
\[
d_x := \sum_{y \in V} w_{x,y}.
\]
The \textbf{adjacency matrix} of a weighted graph is the endomorphism $A_G:\RR V \rightarrow \RR V$ defined entry-wise by
\[
A_G(x,y) = w_{x,y}
\]
The \textbf{Laplacian matrix} of a weighted graph is the endomorphism $L_G:\RR V \rightarrow \RR V$ defined entry-wise by
\[
L_G(x,y) = \begin{cases} d_x - w_{x,x} &\text{ if $x = y$}\\ -w_{x,y} &\text{ otherwise.}\end{cases}
\]
\end{definition}

\begin{remark}
It is important to note that the weighted degree of a vertex is not necessarily the same as the degree of that vertex. The former depends on the weight function, while the latter is an invariant of the graph itself.
\end{remark}

Weighted graphs can be thought of as encoding certain statistics of random walks on the graph.

\begin{definition}\label{graphdef}
Let $G$ be a graph with vertex set $V$ and edge set $E$. Then a \textbf{model of a random walk on $G$} is a connected, reversible Markov chain $(X_t,P)$ with state space $V$, such that for all $x \neq y \in V$, $P(x,y) = 0$ whenever $\{x,y\} \notin E$.
\end{definition}

\begin{example}
Given a graph $G = (V,E)$, we can consider $G$ as a weighted graph by setting
\[
w_{x,y} = \begin{cases} 1 &\text{ if $\{x,y\} \in E$}\\ 2 & \text{if $x = y \in E$}\\ 0 &\text{ otherwise.}\end{cases}
\]
Note that any time we speak of the adjacency matrix or Laplacian of a graph, these are the weights we are implicitly using.

If we are given the model of a random walk on $G$, $(X_t,P)$, then we may define a weight function on $G$ in terms of $P$ and the stationary distribution $\pi$ by setting
\[
w_{x,y} = P(x,y)\pi(x).
\]
It is clear that this function satisfies the second condition required in the definition of a weight function. The fact that it satisfies the first follows from the fact that the Markov chain $X_t$ is reversible. If a weighted graph $G$ has the property that $w_{x,y} = P(x,y)\pi(x)$ for some model of a random walk on $G$, then we say $(G,w)$ is \textbf{stochastic}.
\end{example}

\begin{remark}\label{allstoch}
We note that any weighted graph can be scaled to be stochastic. This is achieved by defining a new weight function
\[
\widetilde{w}_{x,y} = \frac{w_{x,y}}{\sum_{s,t}w_{s,t}}
\]
\end{remark}

Some of the main results of this work are concerned with understanding the hitting times of certain families of Markov chains. As an application of these results, we will also be able to conclude facts about discrete Green's functions associated to natural families of graphs. Discrete Green's functions in this context were introduced by Chung and Yau in \cite{CY}. Following that initial work, Green's functions were used by Ellis \cite{E}, Xu and Yau \cite{XY}, and others to prove non-trivial facts about a variety of combinatorial games on graphs.

\begin{definition}
Let $G$ be a weighted graph with weight function $w$. Then the \textbf{normalized Laplacian} of $G$ is the linear function $\Lap: \RR V(G) \rightarrow \RR V(G)$ defined by
\[
\Lap = T^{-\frac{1}{2}}L_GT^{-\frac{1}{2}}
\]
where $T$ is the diagonal matrix with entries
\[
T(x,x) = d_x
\]

We observe that the kernel of $\Lap$ has dimension one, and is spanned by $\phi_0 = \sum_{x \in V} \frac{d_x}{vol} x$, where
\[
vol = \sum_x d_x.
\]
The \textbf{discrete Green's function} associated to $\Lap$ is the (unique) linear function $\Gre$ satisfying:
\begin{enumerate}
\item $\Gre \Lap = \Lap \Gre = Id - P_0$, where $P_0$ is the projection onto the vector $\phi_0$;
\item $\Gre P_0 = 0$.
\end{enumerate}
We therefore think of $\Gre$ as a kind of quasi-inverse to the normalized Laplacian.
\end{definition}

\begin{remark}
The techniques used in the later sections of this paper can be used to prove facts about random walks on certain families of \textbf{directed graphs}. Green's functions as we have presented them above, and as they were originally introduced in \cite{CY}, are only defined on undirected graphs. 
\end{remark}

While it is somewhat more typical for one to use Green's functions to prove statements about important combinatorial invariants, such as hitting times, in this work we take the opposite approach. In Section \ref{hittingtime} we prove stability results about hitting times of random walks on $\FI$-sets (see Definition \ref{fiset}). Using these results we will be able to prove analogous stability theorems for Green's functions on $\FI$-graphs (see Example \ref{figraph}). The main tool we use to achieve this connection is the following.

\begin{theorem}[Chung and Yau, Theorem 8 of \cite{CY}]\label{hittinggreen}
Let $G = (V,E)$ be a stochastic weighted graph, and for any $x,y \in V$ write $Q(x,y)$ for the expected hitting time from $x$ to $y$. Then,
\[
\Gre(x,y) = \frac{\sqrt{d_xd_y}}{vol}\left( Q(x,y) - \frac{1}{vol}\sum_{z \in V} d_zQ(z,y)\right),
\]
where $vol = \sum_{z \in V} d_z$.
\end{theorem}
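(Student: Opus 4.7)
The plan is to exhibit $x \mapsto Q(x,y)$ as an almost-fundamental-solution for the combinatorial Laplacian $L_G$ and then transport the identity across the conjugation $\Lap = T^{-1/2} L_G T^{-1/2}$. The defining properties $\Gre\Lap = I - P_0$ and $\Gre P_0 = 0$ then finish the argument after a short orthogonal-projection computation.

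First I would rewrite the hitting time recursion of Lemma \ref{hitrecursion} as the vector identity $[(I - P) Q(\cdot,y)](x) = 1$ for every $x \neq y$, together with $Q(y,y) = 0$. Pairing both sides with the left null-vector $\pi$ of $I - P$ forces the remaining coordinate to be $1 - 1/\pi(y)$, yielding
\[
(I - P) Q(\cdot,y) = \mathbf{1} - \pi(y)^{-1} \delta_y.
\]
Stochasticity gives $L_G = T(I - P)$, and since $\pi(y) = d_y/\mathrm{vol}$, left-multiplying by $T$ produces $L_G Q(\cdot,y) = d - \mathrm{vol}\cdot\delta_y$, where $d$ is the column vector of weighted degrees. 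Conjugating through $T^{1/2}$ and using $T^{-1/2} d = T^{1/2}\mathbf{1}$, I would rewrite this as
\[
\Lap\bigl(T^{1/2} Q(\cdot,y)\bigr) = T^{1/2}\mathbf{1} - \frac{\mathrm{vol}}{\sqrt{d_y}}\,\delta_y.
\]

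Applying $\Gre$ to both sides converts the left-hand side into $(I - P_0)\,T^{1/2} Q(\cdot,y)$. On the right, $T^{1/2}\mathbf{1}$ lies in the kernel of $\Lap$ (since $L_G \mathbf{1} = 0$), hence in the image of $P_0$, and is annihilated by $\Gre$. Writing $P_0$ as orthogonal projection onto the unit vector $T^{1/2}\mathbf{1}/\sqrt{\mathrm{vol}}$ and expanding $\langle T^{1/2} Q(\cdot,y),\, T^{1/2}\mathbf{1}\rangle = \sum_z d_z Q(z,y)$, I would solve for $\Gre\,\delta_y$ and read off its $x$-coordinate to recover the claimed formula. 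The only real obstacle is bookkeeping: the conjugations by $T^{1/2}$ and the normalization inside $P_0$ have to combine correctly to produce the prefactor $\sqrt{d_x d_y}/\mathrm{vol}$, and the sign choice in the Laplacian convention must be tracked through consistently to the final identity.
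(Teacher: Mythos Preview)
The paper does not prove this theorem; it is quoted from Chung--Yau \cite{CY} as an external input and then used as a black box to deduce the corollary about Green's functions on $\FI$-graphs. There is thus no proof in the paper to compare your argument against.

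Your derivation is the standard one and is sound: the hitting-time recursion gives $(I-P)Q(\cdot,y)=\mathbf 1-\pi(y)^{-1}\delta_y$, left-multiplication by $T$ turns this into $L_GQ(\cdot,y)=d-\mathrm{vol}\cdot\delta_y$, conjugation produces the normalized-Laplacian identity, and applying $\Gre$ together with $\Gre\Lap=I-P_0$, $\Gre P_0=0$ finishes the job. One point worth flagging: carrying your computation through exactly as written yields
\[
\Gre(x,y)\;=\;-\,\frac{\sqrt{d_xd_y}}{\mathrm{vol}}\Bigl(Q(x,y)-\tfrac{1}{\mathrm{vol}}\sum_z d_zQ(z,y)\Bigr)\;=\;\frac{\sqrt{d_xd_y}}{\mathrm{vol}}\bigl(E_\pi[\tau_y]-Q(x,y)\bigr),
\]
i.e.\ the opposite sign to the one printed in the statement. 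This sign is the one consistent with $\Gre$ being positive semidefinite (take $x=y$, where $Q(y,y)=0$ and $E_\pi[\tau_y]>0$), so the discrepancy appears to be a transcription slip in the statement rather than in your argument. It has no effect on the downstream corollary, which only needs that $\Gre_n(\Ob_n)$ is built algebraically from rational functions in $n$. Your closing caveat about tracking the Laplacian sign convention is exactly the right instinct.
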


\subsection{$\FI$-sets and relations}

In this section we review the theory of FI-sets and relations first explored by the authors and Speyer in \cite{RSW}. Along the way we will also give a very brief overview of the theory of $\FI$-modules, discussed in \cite{CEF,CEFN}, and $\FI$-graphs, discussed in \cite{RW}.

\begin{definition}\label{fiset}
We write $\FI$ to denote the category whose objects are the sets $[n] = \{1,\ldots,n\}$, and whose morphisms are injective maps of sets. An \textbf{$\FI$--set} is a functor $Z_\dt$ from $\FI$ to the category of finite sets. If $Z_\dt$ is a $\FI$--set, and $n$ is a non-negative integer, we write $Z_n$ for its evaluation at $[n]$. If $f:[n] \hookrightarrow [m]$ is an injection of sets, then we write $Z(f)$ for the map induced by $Z_\dt$.

An \textbf{$\FI$-subset}, or just a \textbf{subset}, of an $\FI$-set $Z_\dt$ is an $\FI$-set $Y_\dt$ for which there exists a natural transformation $Y_\dt \rightarrow Z_\dt$ such that $Y_n \hookrightarrow Z_n$ is an injection for all $n \geq 0$.
\end{definition}

While the above definition might appear somewhat abstract, one thing we hope to impress upon the reader is that one can think about these objects in quite concrete terms. To see this, first observe that for each $n$, $Z_n$ carries the natural structure of an $\Sn_n$-set, induced from the endomorphisms of $\FI$. With this in mind, one may therefore think of an $\FI$-set $Z_\dt$ as a sequence of $\Sn_n$-sets $Z_n$, which are compatible with one another according to the actions of the morphisms of $\FI$.

\begin{example} \label{firstex}
Perhaps the most elementary example of an $\FI$-set is that defined by the assignment
\[
Z_n := [n].
\]

For a more interesting example, let $\lambda = (\lambda_1,\ldots,\lambda_h)$ denote a partition of some fixed integer $m$. Then we may associate to $\lambda$ a conjugacy class of $\Sn_m$; the class associated to the cycle structure $(\lambda_1,\ldots,\lambda_h)$. We write $c_{\lambda}$  to denote this conjugacy class. For each $n \geq m$, we define $\lambda[n]$ to be the partition
\[
\lambda[n] := (\lambda,\underbrace{1,\ldots,1}_{\text{ $n-m$ times}}),
\]
and set,
\[
c^n_\lambda = \text{ the conjugacy class of $\Sn_n$ associated to $\lambda[n]$.}
\]
Then we obtain an $\FI$ set whose assignment on points is given by
\[
Z_n = \begin{cases} \emptyset &\text{ if $n < m$}\\ c_\lambda^n & \text{ otherwise.}\end{cases}
\]
For any injection of sets $f:[n] \hookrightarrow [r]$, and any $\pi \in c_\lambda^n$ we define
\[
Z(f)(\pi)(i) = \begin{cases} i &\text{ if $i$ is not in the image of $f$}\\ f \circ \pi \circ f^{-1}(i) &\text{ otherwise.}\end{cases}
\]
In short, we see that this $\FI$-set encodes and extends on the conjugation actions on the classes $c_\lambda^n$. This particular $\FI$-set was studied by the first author in \cite{R}, due to its connection with quandle theory.
\end{example}

As one might expect, it is in the best interest of the theory to restrict our attention to a particular class of ``well-behaved'' $\FI$-sets. To this end we have the following definition.\\

\begin{definition}\label{fingen}
An $\FI$--set $Z_\dt$ is said to be \textbf{finitely generated in degree $\leq d$}, if for all $n \geq d$, one has
\[
Z_{n+1} = \bigcup_{f} Z(f)(Z_n)
\]
where the union is over all injections $f:[n] \hookrightarrow [n+1]$.
\end{definition}

The two examples given in Example \ref{firstex} are both finitely generated. The first example is generated in degree 1, while the second is generated in degree $m$.\\

Proofs of the statements in the following theorem can be found in \cite{RW} and \cite{RSW}, although they essentially trace back to the original work of Church, Ellenberg, Farb, and Nagpal in \cite{CEF,CEFN} in the context of $\FI$-modules.\\

\begin{theorem}\label{mainstructurethm}
Let $Z_\dt$ denote an $\FI$-set which is finitely generated in degree $\leq d$. Then,
\begin{enumerate}
\item (The Noetherian Property) every subset of $Z_\dt$ is also finitely generated;

\item (Polynomial stability) for $n \gg 0$, the function
\[
n \mapsto |Z_n|
\]
agrees with a polynomial of degree $\leq d$;

\item (Finiteness of torsion) for $n \gg 0$, and all injections $f:[n] \hookrightarrow [n+1]$, the induced map $Z(f)$ is injective;

\item (Stabilization of orbits) for $n \gg 0$, and all injections $f:[n] \hookrightarrow [n+1]$, the induced map
\[
Z_n / \Sn_n \rightarrow Z_{n+1}/\Sn_{n+1}
\]
is an isomorphism. We denote this limiting orbit set by $Z_\dt / \Sn_\dt$, and call its elements the \textbf{orbits} of $Z_\dt$. \label{stableorbit}
\end{enumerate}
\end{theorem}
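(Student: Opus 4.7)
The plan is to establish the four conclusions in the order (1), (4), (2), (3): the Noetherian property drives the rest, orbit stabilization can be extracted from finite generation by a short counting argument, polynomial stability then follows from counting orbit-by-orbit, and finally torsion finiteness reduces to an analysis within a single orbit.

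For (1), I would linearize. The free $\mk$-module $\mk Z_\dt$ is a finitely generated $\FI$-module over the Noetherian ring $\mk$, and any $\FI$-subset $Y_\dt \subseteq Z_\dt$ spans a sub-$\FI$-module $\mk Y_\dt$. The Church--Ellenberg--Farb--Nagpal theorem \cite{CEFN} produces a finite generating set for $\mk Y_\dt$; because each $Y_n$ is a basis of $\mk Y_n$, these generators may be replaced by actual elements of $\bigsqcup_n Y_n$, yielding a set-theoretic generating family for $Y_\dt$. The main technical subtlety is precisely this passage from module generators to set generators, but it is standard in the $\FI$-set literature and is carried out in \cite{RSW}.

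For (4), finite generation in degree $\leq d$ simultaneously gives two things. First, the orbit map $Z_n/\Sn_n \to Z_{n+1}/\Sn_{n+1}$ is surjective for every $n \geq d$: any element of $Z_{n+1}$ has the form $Z(f)(y)$ for some $f:[n]\into[n+1]$, and post-composing with a permutation in $\Sn_{n+1}$ carrying $f$ to $\iota_{n,n+1}$ places $Z(f)(y)$ in the $\Sn_{n+1}$-orbit of the image of $y$. Second, $|Z_n/\Sn_n| \leq |Z_d|$ for all $n \geq d$, since each orbit at level $n$ contains the image of some element of a fixed generating set in $Z_d$. A sequence of surjections between finite sets of uniformly bounded cardinality must eventually be bijective.

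Given (4), statement (2) is a direct count: for $n$ large each transitive constituent $O^{(i)} \subseteq Z_\dt$ takes the standard form $\FI([k_i],-)/H_i$ with $k_i \leq d$, whence $|O^{(i)}_n| = \binom{n}{k_i} \, k_i!/|H_i|$ is polynomial in $n$ of degree $k_i$, and summing the finitely many orbits produces $|Z_n|$ as a polynomial of degree $\leq d$. For (3), one reduces via (4) to a single transitive orbit $O$, picks a basepoint $o \in O_k$ with stabilizer $H \leq \Sn_k$, and observes that the relation $R_\dt \subseteq \FI([k],-)/H \times \FI([k],-)/H$ consisting of pairs collapsing to the same element of $O$ is an $\FI$-subset of a finitely generated $\FI$-set; by (1) it is finitely generated, so its generators sit in bounded degree, and for $n$ large the only identifications are those already forced by $H$. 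Then $O_n = \FI([k],[n])/H$ on the nose, and the map induced by post-composition with $\iota_{n,n+1}$ is visibly injective because $\iota_{n,n+1}$ itself is. The hardest conceptual step throughout is the Noetherianity of (1); the remaining items are structural bookkeeping following the pattern established for $\FI$-modules in \cite{CEF} and transported to $\FI$-sets in \cite{RSW} and \cite{RW}.
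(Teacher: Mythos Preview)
The paper does not give its own proof; it cites \cite{RW}, \cite{RSW}, and ultimately \cite{CEF,CEFN}. Your arguments for (1) and (4) are correct and standard, and (2) is fine provided the orbit classification (each stable orbit is eventually $\FI([k],-)/H$) is being invoked as a known fact from \cite{RSW} rather than derived here.

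The genuine gap is in (3). From finite generation of the collision relation $R_\dt \subseteq \bigl(\FI([k],-)/H\bigr)^2$ you conclude that ``for $n$ large the only identifications are those already forced by $H$,'' i.e.\ that $R_n$ is eventually the diagonal. This does not follow. Consider the $\FI$-set with $Z_n = \emptyset$ for $n<2$ and $Z_n = \{\ast\}$ for $n \geq 2$: it is finitely generated in degree $2$, and with your choice $k=2$, $o=\ast$, $H=\Sn_2$, the surjection $\FI([2],-)/\Sn_2 \to Z_\dt$ collapses $\binom{n}{2}$ points to one, so $R_n$ is the full square for every $n\geq 2$---finitely generated, yet never the diagonal. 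The problem is that your chosen $k$ need not be the minimal one appearing in the classification, and nothing in your argument forces it to be. The simplest repair is to observe that (3) is an immediate consequence of the classification you already invoke in (2), since $\FI([k],-)/H$ has injective transition maps for $n \geq k$; alternatively, linearize once more and use directly that a finitely generated $\FI$-module over a Noetherian ring has torsion supported in bounded degree \cite{CEFN}, so $\mk Z_n \to \mk Z_{n+1}$ is injective for $n\gg 0$, whence $Z_n \to Z_{n+1}$ is as well.
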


\begin{remark}
Throughout this work we will often abuse notation and consider orbits as being subsets of their corresponding $\FI$-set. While this is not literally true, one can justify its usage as follows. Given an orbit $\Ob \in Z_\dt / \Sn_\dt$, one obtains a collection of orbits $\Ob_n \subseteq Z_n$ for $n \gg 0$. We therefore consider the subset of $Z_\dt$ which is the empty set until the maps of (\ref{stableorbit}) begin to be isomorphisms, at which point the subset is defined to agree with $\Ob$. Note that in practice we are usually concerned with asymptotic questions, and therefore it is not particularly important when we decide to have this subset agree with $\Ob$.
\end{remark}

In the paper \cite{RSW}, where $\FI$-sets were first examined, it is argued that many naturally occurring examples of $\FI$-sets come equipped with a collection of $\Sn_n$-equivariant relations. To be more precise, one has the following definition.

\begin{definition}\label{relation}
Let $Z_\dt$ and $Y_\dt$ denote two $\FI$-sets. Then the product $Z_\dt \times Y_\dt$ carries the structure of an $\FI$-set in a natural way. A \textbf{relation} between $Z_\dt$ and $Y_\dt$ is a subset $R_\dt$ of $Z_\dt \times Y_\dt$. If $Z_\dt = Y_\dt$, then we say that $R_\dt$ is a \textbf{relation on $Z_\dt$}

Given a relation $R_\dt$ between $Z_\dt$ and $Y_\dt$ we obtain a family of $\Sn_n$-linear maps
\[
r_n: \RR Z_n \rightarrow \RR Y_n
\]
where $\RR Z_n$ is the $\RR$-linearization of the set $Z_n$, and similarly for $\RR Y_n$. Properties of these maps were a major focus of \cite{RSW}. In this work, they will naturally arise as probability transition matrices of certain families of Markov chains.
\end{definition}

It is a fact, proven in \cite{RSW}, that any relation between two finitely generated $\FI$-sets is itself finitely generated. Perhaps the two most notable classes of examples of $\FI$-set relations arise in the theories of $\FI$-posets and $\FI$-graphs.

\begin{example}\label{figraph}
In \cite{G}, Gadish defines what he calls an $\FI$-poset, a functor from $\FI$ to the category of posets. One may also think of an $\FI$-poset as an $\FI$-set, paired with a relation encoding the ordering. Gadish uses this structure to prove non-trivial facts about linear subspace arrangements.

In \cite{RW}, the authors defined what they called $\FI$-graphs, functors from $\FI$ to the category of graphs and graph homomorphisms. In this case, one may think of an $\FI$-graph as an $\FI$-set of vertices paired with a symmetric relation dictating how these vertices are connected through edges. One should note in this case that the associated linear maps $r_n$ are what one would usually call the \textbf{adjacency matrices} of the corresponding graphs.

Some examples of $\FI$-graphs include the complete graphs $K_n$, whose vertices are the set $[n]$, and whose associated relation is comprised of all pairs $(i,j)$ with $i \neq j$, and the Kneser graphs $KG(n,r)$, whose vertices are given by $r$-element subsets of $n$ and whose associated relation is comprised of all pairs $(A,B)$ such that $A \cap B = \emptyset$. We will see many more examples of $\FI$-graphs throughout the work.
\end{example}

To state the main result of \cite{RSW}, we first need to understand the algebra of relations.

\begin{definition}
Let $Z_\dt$ denote a finitely generated $\FI$-set, and let $R_\dt$ denote a relation on $Z_\dt$. Then $R_n/\Sn_n$ is a subset, in the usual sense, of the set $(Z_n \times Z_n)/\Sn_n$. Thus, at least for $n \gg 0$, one can think of a relation on $Z_\dt$ as some union of orbits of pairs of elements of $Z_n$. If $\Ob \in (Z_\dt \times Z_\dt)/\Sn_\dt$ we will reserve $\{r^\Ob_n:\RR Z_n \rightarrow \RR Z_n\}$ to denote the collection of $\Sn_n$-linear maps induced by the relation given by $\Ob$.
\end{definition}

\begin{proposition}\label{relpoly}
Let $Z_\dt$ be a finitely generated $\FI$--set, and let $\Ob_1,\Ob_2$ denote two orbits of $Z_\dt \times Z_\dt$. Then for each orbit $\Ob$ of $Z_\dt \times Z_\dt$, there exists a polynomial, $p_{\Ob_1,\Ob_2,\Ob}(n) \in \RR[n]$, such that for $n \gg 0$
\[
r_n^{\Ob_1} \circ r_n^{\Ob_2} = \sum_{\Ob} p_{\Ob_1,\Ob_2,\Ob}(n) r^\Ob_n.
\]
\end{proposition}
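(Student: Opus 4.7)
The plan is to realize each coefficient $p_{\Ob_1,\Ob_2,\Ob}(n)$ as the size of a fiber of a projection between finitely generated $\FI$-sets, and then to compute those fiber sizes directly via orbit-stabilizer so as to produce a polynomial answer.

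Because each $r_n^{\Ob_i}$ is the $0/1$ indicator matrix of the relation $\Ob_i$, a direct expansion gives
\[
(r_n^{\Ob_1} \circ r_n^{\Ob_2})(x,z) = |\{y \in Z_n : (x,y) \in \Ob_{1,n},\ (y,z) \in \Ob_{2,n}\}|.
\]
By $\Sn_n$-equivariance this count depends only on the $\Sn_n$-orbit of $(x,z)$, so the composition is automatically of the form $\sum_\Ob q_\Ob(n)\, r_n^\Ob$ where $q_\Ob(n)$ denotes the above count evaluated at any representative $(x_0,z_0) \in \Ob_n$. Thus the content of the proposition is precisely that each function $q_\Ob$ agrees with a polynomial for $n \gg 0$.

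Next I would introduce the $\FI$-set of ``witnesses to the composition'',
\[
T^\Ob_\dt := \{(x,y,z) \in Z_\dt \times Z_\dt \times Z_\dt \;:\; (x,y) \in \Ob_1,\ (y,z) \in \Ob_2,\ (x,z) \in \Ob\},
\]
which is an $\FI$-subset of $Z_\dt^3$. Since $Z_\dt^3$ is finitely generated, the Noetherian property (Theorem \ref{mainstructurethm}(1)) gives finite generation of $T^\Ob_\dt$, and stabilization of orbits (Theorem \ref{mainstructurethm}(4)) ensures that only finitely many $\Ob$ contribute nontrivially. The projection $T^\Ob_n \to \Ob_n$, $(x,y,z) \mapsto (x,z)$, is $\Sn_n$-equivariant onto the single $\Sn_n$-orbit $\Ob_n$, so every fiber has the same cardinality, equal to $q_\Ob(n)$.

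The main obstacle I anticipate is upgrading the resulting identity $q_\Ob(n) = |T^\Ob_n|/|\Ob_n|$ from a rational function to an actual polynomial; Theorem \ref{mainstructurethm}(2) gives the numerator and denominator as polynomials, but does not by itself guarantee polynomial behaviour of the ratio. To handle this I would decompose $T^\Ob_\dt$ into its $\Sn_n$-orbits $O$, again finite in number by Theorem \ref{mainstructurethm}(4), and analyse each contribution separately. Elements of the fiber of $O_n$ over a fixed $(x_0,z_0) \in \Ob_m$ form a single orbit for $\mathrm{Stab}_{\Sn_n}(x_0,z_0)$ acting on a chosen representative $y_0$ with $(x_0,y_0,z_0) \in O_{m'}$ at minimal level. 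For $n$ large, both stabilizers attain the standard asymptotic shape $\mathrm{Stab}_{\Sn_n}(x_0,z_0) = G \times \Sn_{n-m}$ and $\mathrm{Stab}_{\Sn_n}(x_0,y_0,z_0) = H \times \Sn_{n-m'}$ for fixed finite groups $H \leq G$, so orbit-stabilizer presents the $O$-fiber size as
\[
\frac{|G|(n-m)!}{|H|(n-m')!} = \frac{|G|}{|H|}(n-m)(n-m-1)\cdots(n-m'+1),
\]
a polynomial in $n$. Summing these polynomial contributions over the finitely many orbits $O$ projecting onto $\Ob$ exhibits $q_\Ob(n)$ as a polynomial, completing the argument.
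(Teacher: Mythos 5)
Your construction coincides with the paper's almost verbatim up to the point where you have the $\FI$-set of witnesses $T^\Ob_\dt$ (the paper calls it $X_\dt$) and the identity $q_\Ob(n) = |T^\Ob_n|/|\Ob_n|$ for large $n$. The divergence is in how the rational expression $|T^\Ob_n|/|\Ob_n|$ is upgraded to a polynomial. The paper dispatches this in one line: by Theorem~\ref{mainstructurethm}(2) the ratio agrees with a rational function for $n \gg 0$, and it is clearly integer-valued, and the only rational functions that take integer values at all sufficiently large integers are polynomials. You instead decompose $T^\Ob_\dt$ into its finitely many stable $\Sn_\dt$-orbits and use the stabilizer structure from Theorem~A of \cite{RSW} (stabilizers eventually of the form $G \times \Sn_{n-m}$) together with orbit--stabilizer to write each orbit's contribution to the fiber as the falling factorial $\tfrac{|G|}{|H|}(n-m)(n-m-1)\cdots(n-m'+1)$, then sum. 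Both arguments are correct and close the gap you correctly identified. Your route is longer but more constructive: it exhibits $p_{\Ob_1,\Ob_2,\Ob}$ explicitly as a nonnegative integer combination of falling factorials (so in particular shows it lies in the binomial-coefficient $\ZZ$-lattice), whereas the paper's integrality-implies-polynomial trick is shorter and avoids invoking the orbit-classification theorem. One small point worth making explicit in your write-up: you should note that whether a given stable orbit $O$ of $T^\Ob_\dt$ projects onto $\Ob$ is an eventually constant (in $n$) condition by equivariance, so that the set of orbits $O$ you sum over is well-defined independently of $n$ for $n \gg 0$; and that $m' \geq m$ automatically since $\mathrm{Stab}_{\Sn_n}(x_0,y_0,z_0) \leq \mathrm{Stab}_{\Sn_n}(x_0,z_0)$, which is what makes the ratio of factorials a genuine polynomial rather than a reciprocal.
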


\begin{proof}
We may think of $r_n^{\Ob_1}$ as a matrix whose rows and columns are labeled by the elements of $Z_n$, and whose entries are given by
\[
(r_n^{\Ob_1})_{x,y} = \begin{cases} 1 &\text{ if $(x,y) \in (\Ob_1)_n$}\\ 0 &\text{ otherwise.}\end{cases}
\]
A similar description exists for $r_n^{\Ob_2}$. It follows that
\[
(r_n^{\Ob_1} \circ r_n^{\Ob_2})_{x,y} = |\{z \mid (x,z) \in (\Ob_1)_n \text{ and } (z,y) \in (\Ob_2)_n\}|.
\]
Therefore, fixing a collection of representatives $(x_n,y_n) \in \Ob_n$,
\[
r_n^{\Ob_1} \circ r_n^{\Ob_2} = \sum_{\Ob} p_{\Ob_1,\Ob_2,\Ob}(n) r^{\Ob}_n,
\]
where,
\[
p_{\Ob_1,\Ob_2,\Ob}(n) = |\{z \mid (x_n,z) \in (\Ob_1)_n \text{ and } (z,y_n) \in (\Ob_2)_n.\}
\]
It remains to show that the function $p_{\Ob_1,\Ob_2,\Ob}(n)$, as defined in the previous line, agrees with a polynomial.

We begin by defining an $\FI$-set
\[
X_n := \{(x,z,y) \mid (x,y) \in \Ob_n, (x,z) \in (\Ob_1)_n, (z,y) \in (\Ob_2)_n\}
\]
whose transition maps are defined in the obvious way. This $\FI$-set is finitely generated, as it is a subset of $Z_\dt \times Z_\dt \times Z_\dt$, and the latter $\FI$-set is finitely generated. Theorem \ref{mainstructurethm} implies that the map
\[
n \mapsto |X_n|
\]
agrees with a polynomial for $n \gg 0$. It follows that
\[
n \mapsto |X_n| / |\Ob_n| = p_{\Ob_1,\Ob_2,\Ob}(n)
\]
agrees with a rational function for $n \gg 0$. On the other hand, it is clear that $p_{\Ob_1,\Ob_2,\Ob}(n)$ is an integer for all $n$. The only rational functions which take integral values on all sufficiently large integers are polynomials. This finishes the proof.
\end{proof}

The content of the above proposition is that it permits us to make the following definition

\begin{definition}\label{virtualrelation}
Let $Z_\dt$ be a finitely generated $\FI$-set, then the \textbf{relation algebra of $Z_\dt$} is defined by the presentation
\[
\R(Z_\dt) := K\langle r^{\Ob}\rangle_{\Ob \in Z_\dt / \Sn_\dt} /(r^{\Ob_1} \cdot r^{\Ob_2} = \sum_{\Ob} p_{\Ob_1,\Ob_2,\Ob}(n) r^\Ob)
\]
where $K$ is the function field $\RR(n)$, and $p_{\Ob_1,\Ob_2,\Ob}$ are the polynomials of Proposition \ref{relpoly}. An element
\[
\sum_{\Ob} a_{\Ob} r^\Ob \in \R(Z_\dt)
\]
is known as a \textbf{virtual relation} of $Z_\dt$. Given a virtual relation $r = \sum_{\Ob} a_{\Ob} r^\Ob \in \R(Z_\dt)$, we can associate a collection of $\Sn_n$-linear maps $r_n: \RR Z_n \rightarrow \RR Z_n$ for all $n \gg 0$ by setting
\[
r_n := \sum_{\Ob} a_{\Ob}(n) r^\Ob_n
\]
We call $r_n$ the \textbf{specialization of $r$ to $n$}.
\end{definition}

\begin{remark}
For the remainder of this paper, we will reserve $K$ to denote the rational function field $K = \RR(n)$.

Also note that the choice of having $K$ be this field is somewhat arbitrary. Indeed, one may replace $\RR$ with any characteristic 0 field. Moreover, as it is sometimes useful to allow for square roots of $n$, one may just as well work with algebraic extensions of $K$. In this paper we have taken $K$ to be $\RR(n)$ simply for ease of exposition.
\end{remark}

The following theorem follows from \cite[Corollary C]{RSW}.

\begin{theorem}\label{eigenvaluestab}
Let $Z_\dt$ denote a finitely generated $\FI$-set, and let $r = \sum_{\Ob} a_{\Ob} r^\Ob \in \R(Z_\dt)$ be a virtual relation on $Z_\dt$. Then for $n \gg 0$
\begin{enumerate}
\item the number of distinct eigenvalues of $r_n$ is independent of $n$;
\item there exists a finite list $\{f_i\}$ of functions which are algebraic over $K$, for which $\{f_i(n)\}$ is the complete list of eigenvalues of $r_n$;
\item for any $f_i$ as in the previous part, the function
\[
n \mapsto \text{ the algebraic multiplicity of $f_i(n)$ as an eigenvalue of $r_n$}
\]
agrees with a polynomial.
\end{enumerate}
\end{theorem}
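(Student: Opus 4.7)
The plan is to reduce the theorem to polynomiality of the power sums $p_k(n) := \text{tr}(r_n^k)$ and exploit the finite-dimensionality of $\R(Z_\dt)$ as a $K$-algebra. First I would show that each $p_k(n)$ agrees with a polynomial in $n$ for $n \gg 0$. Iterating Proposition \ref{relpoly},
\[
r_n^k = \sum_\Ob q_{k,\Ob}(n)\, r_n^\Ob
\]
with $q_{k,\Ob}(n) \in \RR[n]$. The trace $\text{tr}(r_n^\Ob)$ counts $\{x \in Z_n : (x,x) \in \Ob_n\}$, which defines a subset of the finitely generated $\FI$-set $Z_\dt$; Theorem \ref{mainstructurethm} gives polynomiality of this count, hence of $p_k(n)$.

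Next, since $\R(Z_\dt)$ has $K$-dimension equal to the stable number of $\Sn_\dt$-orbits in $Z_\dt \times Z_\dt$ (finite by Theorem \ref{mainstructurethm}), the virtual relation $r$ satisfies a minimal polynomial $m(t) \in K[t]$ of some bounded degree. For $n \gg 0$, the specialization $m_n(t) \in \RR[t]$ annihilates $r_n$, so every eigenvalue of $r_n$ is a root of $m_n(t)$. Taking the squarefree part of $m(t)$ and passing to an algebraic closure $\overline{K}$ produces a list of algebraic functions $f_1,\ldots,f_s$ over $K$ whose specializations remain pairwise distinct for $n \gg 0$ (the discriminant of the squarefree part is a nonzero element of $K$). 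This establishes (2) and an \emph{a priori} upper bound of $s$ on the number of distinct eigenvalues.

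Finally, I would recover the algebraic multiplicities $m_i(n)$ of the eigenvalue $f_i(n)$ via the Vandermonde system
\[
p_k(n) = \sum_{i=1}^s m_i(n)\, f_i(n)^k, \qquad k = 0,1,\ldots,s-1,
\]
whose coefficient matrix is invertible over $\overline{K}$. Solving expresses each $m_i(n)$ as an algebraic function of $n$; since $m_i(n)$ is a non-negative integer for every large $n$, a standard argument forces $m_i(n)$ to agree with a polynomial for $n \gg 0$, which proves (3) and, in particular, $m_i(n) \geq 1$ eventually, which proves (1). The main obstacle is this final integrality step --- showing that an algebraic function over $K$ taking non-negative integer values on large integers must eventually be polynomial --- together with the squarefree-part technicality when $m(t)$ has repeated irreducible factors; these are the points where \cite[Corollary C]{RSW} supplies the essential structural input.
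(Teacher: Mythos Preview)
The paper does not actually prove this theorem: it simply records that the statement ``follows from \cite[Corollary C]{RSW}.'' So there is no argument in the paper to compare against; your outline is an attempt at an independent proof where the paper offers none.

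Your strategy---compute the power sums $p_k(n)=\mathrm{tr}(r_n^k)$ as elements of $K$, use finite-dimensionality of $\R(Z_\dt)$ to get a fixed annihilating polynomial, and then invert a Vandermonde system to recover the multiplicities---is sound in outline, and it is genuinely different from the structural/representation-theoretic approach that underlies \cite{RSW}. Two corrections are worth noting. First, since the coefficients $a_\Ob$ lie in $K=\RR(n)$ rather than $\RR[n]$, the functions $q_{k,\Ob}(n)$ and hence $p_k(n)$ are rational functions, not polynomials; this is harmless for the Vandermonde step but your write-up should not claim polynomiality there. Second, your deduction of (1) is slightly misstated: polynomiality of $m_i$ does not give $m_i(n)\ge 1$ (it could be the zero polynomial), but (1) still follows because the set $\{i : m_i \not\equiv 0\}$ is fixed and each nonzero $m_i$, being a polynomial with non-negative integer values, is eventually positive.

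You are right that the crux is the integrality step: showing that an element of $\overline{K}$ taking non-negative integer values on all large integers must agree with a polynomial. This is provable---for instance via Puiseux expansion at infinity and an iterated finite-difference argument, since $\Delta^k g(m)$ is an integer for all $m\gg 0$ and tends to $0$ once $k$ exceeds the leading Puiseux exponent---but it is a genuine lemma, not a ``standard argument,'' and your proof is incomplete until it is supplied. Since you ultimately invoke \cite[Corollary C]{RSW} for exactly this point, your proposal ends up deferring to the same source as the paper; the value added is a transparent reduction of the theorem to that one integrality lemma, whereas the paper gives no such reduction.
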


\begin{example}
Let $Z_\dt$ be the first $\FI$-set discussed in Example \ref{firstex}. Namely, we have
\[
Z_n = [n], Z(f) = f.
\]
Then there are two orbits of $Z_\dt \times Z_\dt$ given by
\[
(\Ob_1)_n := \{(x,y) \mid x \neq y\}, (\Ob_2)_n := \{(x,x) \mid x \in [n]\}.
\]
Writing $r^{(1)} := r^{\Ob_1}, r^{(2)} := r^{\Ob_2}$, we see that every virtual relation on $Z_\dt$ is given by
\[
r = \alpha_1 r^{(1)}+ \alpha_2 r^{(2)}
\]
where $\alpha_i \in K$. For this example, we will examine a few specific choices for the $\alpha_i$.

First, if we set $\alpha_1 = 1$ and $\alpha_2 = 0$, then for $n\geq 0$, $r_n = r^{(1)}_n$ is the adjacency matrix of the complete graph $K_n$. A simple computation verifies that for $n \geq 2$, the eigenvalues of $r_n$ are $n-1$ and $-1$, with multiplicities $1$ and $n-1$, respectively. Next, we may set $\alpha_1 = -1$ and $\alpha_2 = n-1$. In this case $r_n$ is the Laplacian matrix of the complete graph. The eigenvalues of $r_n$ are given by $n$ and $0$, with multiplicities $n-1$ and $1$, respectively. Finally, setting $\alpha_1 = \frac{1}{n-1}$ and $\alpha_2 = 0$, we find that $r_n$ is the transition matrix of the simple random walk on the complete graph.

If $G_\dt$ is a finitely generated $\FI$--graph with vertex $\FI$--set $V(G_\dt)$, it is shown in \cite{RW} that there exist virtual relations $r^A,r^L,$ and $r^P$ on $V(G_\dt)$ such that $r^A_n, r^L_n,$ and $r^P_n$ are the adjacency matrix, the Laplacian matrix, and the transition matrix of a simple random walk on $G_n$, respectively. One of the main results of this paper is that discrete Green's functions and expected hitting times can also be realized as the specialization of some virtual relation. 
\end{example}

With all of the previous background, we are finally ready to define what is meant by a random walk on an $\FI$--set.

\begin{definition}
Let $Z_\dt$ denote a finitely generated $\FI$--set. A virtual relation $r = \sum_{\Ob} a_{\Ob}r^{\Ob} \in \mathcal{R}(Z_\dt)$ is said to be a \textbf{transition relation} if for all $n \gg 0$:
\begin{enumerate}
\item $a_\Ob(n) \in [0,1]$ for all orbits $\Ob$;
\item writing $r_n$ in the standard basis of $\RR Z_n$, the row sums are all 1.
\end{enumerate}
A \textbf{Markov chain on $Z_\dt$} is a sequence of Markov chains $\{X_{t,n}\}_{n \geq 0}$, with state space $Z_n$, such that there exists a transition relation $P$, for which $P_n$ is the transition matrix of $X_{t,n}$ whenever $n \gg 0$.
\end{definition}

\begin{remark}
If $(X_{t,\dt},P)$ is a Markov chain on some finitely generated $\FI$--set $Z_\dt$, then we will often apply terminology from the theory of Markov chains to describe it. When this is done it is meant to communicate that the Markov chains $X_{t,n}$ have a certain property for $n \gg 0$. For instance, a Markov chain $X_{t,\dt}$ on $Z_\dt$ is said to be connected, if $X_{t,n}$ is for all $n \gg 0$. As before, we will often denote a Markov chain by the pair $(X_{t,\dt},P)$
\end{remark}

\begin{example}
Let $G_\dt$ denote a finitely generated $\FI$-graph. For any orbit $\Ob$ of $V(G_\dt)$, and any $n \gg 0$, every vertex $v \in \Ob_n$ has the same degree. Denoting this degree by $\mu(\Ob_n)$, it is proven in \cite{RW} that the function
\[
n \mapsto \mu(\Ob_n)
\]
agrees with a polynomial for all $n \gg 0$. It follows that the probability transition matrix of the simple random walk on $G_n$ agrees with the specialization of a virtual relation on $V(G_\dt)$ to $n$. This virtual relation will be a transition relation by construction. It will also be connected whenever $G_\dt$ is, and reversible
\end{example}

Given a connected Markov chain $(X_{t,\dt},P)$ on a finitely generated $\FI$--set $Z_\dt$, our goal will be to understand various statistics of $X_{t,n}$ as functions of $n$. In particular, we will focus on how hitting times and mixing times vary with $n$.

\section{Markov chains on \texorpdfstring{$\FI$}{FI}-sets: Mixing times}

We would like to understand the mixing times of random walks on \fg s. We will need the classification of \fg s from \cite{RSW}, and this is recalled in Section \ref{sec:finfi}. In Section \ref{sec:mixingex}, we will analyze mixing times of random walks on some explicit examples of \fg s. We will then see in Section \ref{sec:mixingpf} that general reversible random walks on \fg s behave in the same way as these examples.

In Section \ref{sec:mixingpf}, we will consider a particular random walk on an \fg{} --- the weighted walk (Definition \ref{def:weightedwalk}). We will use our understanding of finitely generated \fg s, from \cite{RSW} and Section \ref{sec:finfi}, to understand the behavior of the weighted walk (Theorem \ref{the:orbitwalkmixing}), and then compare other reversible walks on the same \fg{} to this one (Theorem \ref{the:fimixing}). 

The main results are that the weighted walk (which has a specific choice of edge weights, depending on the edge orbits) has mixing time which is constant in $n$, and that any other reversible walk on the same \fg{} has mixing time bounded above by the most extreme ratio of probability flows $\pi(x)P(x,y)$ between this walk and the first. In the language of Definition \ref{graphdef}, this last result applies to any model of a random walk on an \fg.

In particular, if we are interested in the simple random walk on an \fg, then the mixing time is bounded by an expression in terms of degrees of vertices and sizes of edge orbits (Corollary \ref{cor:srw}).

\subsection{Characterization of finitely generated $\FI$--graphs}
\label{sec:finfi}

We will need to work with the vertex orbits and edge orbits of \fg s. Because the vertex set of an \fg{} is an \fs, we may use Theorem A of \cite{RSW}. This theorem says that each (eventual) vertex orbit of an \fg{} $G_\dt$ is determined by a nonnegative integer $k$ and a subgroup $H$ of $S_k$. For sufficiently large $n$, the vertices of $G_n$ in this orbit are ordered $k$--tuples of elements of $[n]$, identified under the action of $H$.

We will often label vertex orbits with colors. The following example is complicated enough to illustrate various types of behavior which may occur.

\begin{Example}\label{ex:variety}
We define the vertex set of an \fg{} $G_\dt$. The degree-$n$ piece $G_n$ has the following vertices
\begin{itemize}
\item A red vertex labeled by each ordered $3$--tuple of elements of $[n]$
\item A blue vertex labeled by each unordered pair of elements of $[n]$
\item A green vertex labeled by each $4$--tuple of elements of $[n]$, with cyclic permutations identified.
\end{itemize}
\end{Example}

\begin{Remark}
We will be concerned with only the long-term behavior of \fg s. There are many ways to modify an \fg{} without changing the eventual behavior. For instance, the \fg{} of Example \ref{ex:variety} could be modified to have green vertices appearing only for $n > 10$, or to have twice as many blue vertices --- corresponding to ordered pairs instead --- for $n < 15$.   
\end{Remark}

Edge orbits of $G_\dt$ are necessarily orbits of pairs of vertices, but not all pairs of vertices are connected by an edge. 

\begin{Example}\label{ex:varietyedges}
We now specify some edges for the \fg{} of Example \ref{ex:variety}. For each choice of distinct integers $a$ through $d$, there are edges connecting
\begin{itemize}
\item the red vertex $(a,b,c)$ and the blue vertex $\{a,b\}$
\item the red vertex $(a,b,c)$ and the blue vertex $\{a,d\}$
\item the red vertex $(a,b,c)$ and the green vertex $(a,b,c,d)$, and
\item the green vertices $(a,b,c,d)$ and $(a,b,d,c)$.  
\end{itemize} 
Remember that in this example, cyclic permutations of the $4$--tuple labeling a green vertex denote the same vertex.
\end{Example}

Note that orbits of pairs of vertices are not the same as pairs of orbits of vertices. The first two types of edge specified in Example \ref{ex:varietyedges} are not in the same edge orbit, because the action of $S_n$ on $G_n$ preserves the number and nature of incidences between tuples labeling vertices.

We will now describe the random walk on the graph of Example \ref{ex:varietyedges}. It will be useful to group together steps along edges in each edge orbit. Temporarily, let us refer to the edges defined by Example \ref{ex:varietyedges} as being of types one through four.

From the red vertex $(a,b,c)$, a step along a random edge of type two results in the blue vertex $\{a,d\}$, with $d$ a random element of $[n]$ different from $a,b,$ and $c$. Likewise, a step along an edge of type three is the same as choosing a random $d$ and moving to the green vertex $(a,b,c,d)$.

From the blue vertex $\{a,b\}$, a step along a random edge of type one is the same as choosing a random $c$ and moving to either $(a,b,c)$ or $(b,a,c)$, and a step along a random edge of type two is choosing random $c$ and $d$ and moving to either $(b,c,d)$ or $(a,c,d)$. As previously, new labels are chosen from those not already occurring.

We will conclude the discussion of Example \ref{ex:varietyedges} by outlining why the mixing time of a specific random walk on this graph does not depend on $n$ for large enough $n$. The general version of this result will be proven in Section \ref{sec:mixingpf}. 

The random walk that we will consider is the following. From any vertex, among adjacent edges choose an orbit of edges uniformly and random, and then choose a random adjacent edge uniformly from that orbit. Move along that edge. We will refer to this walk as the orbit walk, and it will be justified in Section \ref{sec:mixingpf}.

There are two different elements to the mixing of this walk. Firstly, the walk moves between red, blue, and green vertices, and it cannot have mixed before the probability of being at vertices of each color is close to correct. This happens quite quickly, because the sequence of vertex colors is a finite Markov chain with three states, connected state space, a state (green) with nonzero holding probability, and transition probabilities of at least $\frac{1}{3}$. The mixing of this walk does not depend on $n$.

The other element necessary for mixing is that the vertex labels must be close to random, and this takes somewhat longer. A walk started at the red vertex $(a,b,c)$ likely still has $a$ in its vertex label several steps later. Indeed, the only ways to reach states without the label $a$ are that moves from green vertices to red drop a random label, type-two moves from blue to red drop a random label, and moves through blue or green vertices may move the label $a$ into a position other than the first, whence it may be replaced by moving to a blue vertex along an edge of type two. This walk actually mixes faster as $n$ increases, because the time taken to replace all initial labels does not change, while the impact of the new random labels depending on the initial labels decreases. This effect is seen more clearly in Example \ref{ex:refresh}.

Section \ref{sec:mixingpf} will put these ideas together to show, as Theorem \ref{the:orbitwalkmixing}, that the mixing time of the orbit walk on any finitely generated \fg{} is bounded above by a function which does not depend on $n$. We will then consider the impact of orbits of different sizes, resulting in Theorem \ref{the:fimixing}, which bounds the mixing time by the most extreme ratio between orbit sizes.

The following example illustrates how increasing $n$ speeds up the mixing of the vertex labels.

\begin{Example}\label{ex:refresh}
Let $G_n$ have vertices indexed by pairs of elements of $[n]$, with two vertices connected by an edge if the labeling pairs are distinct. 
\end{Example} 

For large $n$, the simple random walk on the graph of Example \ref{ex:refresh} mixes in a single step. After taking a single step from the vertex $\{a,b\}$, the walk is equally likely to be at any of the $\binom{n-2}{2}$ of $\binom{n}{2}$ vertices which do not use either of the labels $a$ or $b$. As $n$ increases, this distribution becomes closer (in total variation distance) to the uniform distribution on all $\binom{n}{2}$ vertices, because proportionally more of the total vertices are included.

\subsection{Mixing times of \texorpdfstring{$\FI$--graphs}{FI--graphs}: Examples}
\label{sec:mixingex}

In this section, we will examine the mixing times of random walks on some examples of $\FI$--graphs. We will see in Section \ref{sec:mixingpf} that reversible random walks on general finitely generated \fg s behave in the same way.

Our simplest examples of \fg s had few vertex orbits and many edges. 

\begin{Example}\label{ex:completemixing}
The simple random walk on the complete graph $K_n$ mixes in a single step.
\end{Example}
\begin{proof}
Taking a single step from any vertex results in a uniform distribution on the other $n-1$ vertices. This is a distance of $\frac{1}{n}$ from uniform.
\end{proof}

As noted in Remark \ref{rem:mixingeps}, the random walk of Example \ref{ex:completemixing} mixes in a single step in the sense that for any $\eps$ there is an $N$ so that for all $n > N$, one step of the random walk on $K_n$ is enough to get within $\eps$ of uniform. 

\begin{Example}\label{ex:knesermixing}
The Kneser graph $KG(n,r)$ has vertices labeled by subsets of $[n]$ of size $r$, with two vertices being joined by an edge if they are labeled by disjoint $r$--tuples. For fixed $r$ and large enough $n$, the simple random walk on this graph mixes in a single step, in the sense that for any $\epsilon$ it is possible to choose a large enough $n$ so that after one step the distribution is within $\epsilon$ of uniform in total variation distance. 
\end{Example}
\begin{proof}
Consider a walk started at a given $r$--tuple $\mathbf{x}$. After one step, the walk is uniformly distributed over $r$--tuples which are disjoint from $\mathbf{x}$. But as $n$ grows, the proportion of $r$--tuples which are disjoint from $\mathbf{x}$ approaches $1$, so the total variation distance of this distribution from uniform approaches $0$.
\end{proof}

\begin{Example}\label{ex:johnsonmixing}
The Johnson graph $J(n,r)$ has vertices labeled by subsets of $[n]$ of size $r$, with two vertices being joined by an edge if the corresponding subsets have $r-1$ elements in common. For fixed $r$ and large enough $n$, the simple random walk on this graph mixes in approximately $r\ln(r)$ steps. More precisely, for each $c$ we have that $$\tm(e^{-c}) \leq r\ln(r) + cr.$$
\end{Example}
\begin{proof}
Note that a step of the simple random walk on this graph may be seen as choosing a random element of the present subset $A$, removing it, and replacing it by any other element of $[n] - A$.

We prove the result by coupling, which gives bounds on mixing times by Theorem \ref{the:couplingmixing}. Consider two separate instances of the random walk. We couple the walks so that the the two states --- which are $r$--element subsets of $[n]$ --- tend to have more and more elements in common. To construct the coupling, we consider each possible step the first chain might take, and choose a corresponding step for the second. Let $A_t$ and $B_t$ be the states at time $t$.

The first part of taking a step in either chain is to choose an element to remove from $A_t$. If an element is chosen which is also in $B_t$, then remove it from $B_t$ as well. Otherwise, remove an arbitrary element.

Once an element has been removed from each of the chains, we need to choose replacement elements. Let $f$ be a bijection from $[n] - A_t$ to $[n] - B_t$ with $f(x) = x$ for each $x$ which is in both $[n] - A_t$ and $[n] - B_t$. When the element $x$ is chosen as the replacement in the first chain, choose $f(x)$ in the second. 

We are interested in the time taken until this coupling results in both chains being in the same state. Let $k$ be the number of elements in common on the sets $A_t$ and $B_t$. For simplicity, we will refer to these common elements as `matches'. Then:
\begin{itemize}
\item Removing elements either removes a match, with probability $\frac{k}{r}$, or does not change the number of matches.
\item Replacing elements either adds one match, with probability $\frac{n - (2r-k)}{n-r}$, or adds two matches, with probability $\frac{r-k}{n-r}$.
\end{itemize}

Together, these steps never decrease the number of matches, and strictly increase the number of matches with probability at least $\frac{r-k}{r} + \frac{r-k}{n-r}$. This probability is at least $\frac{r-k}{r}$, so the time until there are $r$ matches is less than the coupon collector time $T$ for $r$ coupons.

It is a standard result (see for instance Proposition 2.4 of \cite{LPW}), that $$\mathbb{P}(T > \lceil r\ln(r) + cr\rceil) \leq e^{-c},$$ so $$\tm(e^{-c}) \leq r\ln(r) + cr.$$
\end{proof}

In Example \ref{ex:johnsonmixing}, it is important that the size of the $r$--tuples is fixed. If larger graphs had vertices indexed by larger tuples, then the mixing time might no longer be constant. However, a finitely generated \fg{} cannot exhibit this behavior, by Theorem $A$ of \cite{RSW} applied to the vertex set.

Future examples will often reduce to the same idea as Example \ref{ex:johnsonmixing} --- that the mixing time is essentially the time required to refresh each element of the subset or tuple labelling the vertices. We shall not repeat these calculations in so much detail. According to Theorem A of \cite{RSW}, each \fg{} that we consider will have vertices described by tuples, potentially identified by the action of a permutation group.  

Examples \ref{ex:knesermixing} and \ref{ex:johnsonmixing} could have been defined using ordered $r$--tuples rather than unordered without any change in the results. 

Examples \ref{ex:completemixing}, \ref{ex:knesermixing}, and \ref{ex:johnsonmixing} have mixing times which are bounded above by functions constant in $n$. (Indeed, our bounds in each case actually decrease slightly with $n$, as we rely on the largest orbits of vetices or edges, and these grow comparatively larger with $n$). Our next examples are of \fg s with slower mixing.

\begin{Example}\label{ex:differentorbits}
Let each graph $G_n$ have vertices labeled by $4$--tuples of elements of $[n]$. There is an edge between two vertices if their $4$-tuples either agree in their first coordinate or agree in their last three coordinates. Then the simple random walk on $G_n$ has mixing time $O(n^2)$.
\end{Example}
\begin{proof}
For the upper bound, we couple two instances of the random walk so that when either replaces its first coordinate, they both do, choosing the same new value if possible. This occurs with probability at least $\frac{n-8}{(n-4) + \binom{n-4}{3}}$. After this happens, the next time the last three coordinates are replaced, we choose the same values if possible. The probability of coupling in one step, given that the first coordinates already match, is at least $\frac{\binom{n-7}{3}}{\frac{n-8}{(n-4) + \binom{n-4}{3}}}$.

For large $n$, it takes $O(n^2)$ steps to match the first coordinates, and then only one more step to match the rest, so the mixing time is at most $O(n^2)$. 

For the lower bound, notice that only $\frac{1}{n}$ of the vertices of $G_n$ have the same first coordinate as the starting vertex, but it likely takes $O(n^2)$ steps for the first coordinate to change.
\end{proof}

Modifying the edge orbits of Example \ref{ex:differentorbits} can produce graphs whose mixing time is $O(n^r)$, for any $r$, by using $(r+2)$--tuples for vertices, and having edges between vertices which differ only in the first coordinate or which agree in the first coordinate. The limiting factor in the mixing time calculation is the time until the unlikely step of replacing the first coordinate. Similar behavior occurs in the next example.

\begin{Example}\label{ex:bottleneck}
Let each graph $G_n$ have $\binom{n}{r}$ red vertices and $\binom{n}{r}$ blue vertices, each labeled by subsets of $r$ elements of $[n]$, and a single green vertex. There are edges between each pair of red vertices, each pair of blue vertices, and between the green vertex and any other vertex. Then the simple random walk on $G_n$ has mixing time $O(n^r)$.
\end{Example}
\begin{proof}
For the upper bound, we couple two instances of the walk. If neither is at the green vertex, then couple them so that if either moves to the green vertex, they both do. Otherwise, wait a step and try again. It takes $O(n^r)$ steps for the two walks to couple.

For the lower bound, a walk starting at a red vertex takes $O(n^r)$ steps until it reaches a non-red vertex, but the stationary measure of the set of red vertices is less than $\frac{1}{2}$.
\end{proof}

In both Example \ref{ex:differentorbits} and Example \ref{ex:bottleneck}, the mixing time is slow because the walk cannot have mixed until it has taken a step along an edge belonging to an unlikely orbit. We will now see that adjusting for the relative likelihood of the various edge orbits produces mixing times which do not grow with $n$, as in Examples \ref{ex:completemixing} to \ref{ex:johnsonmixing}.

\begin{Example}\label{ex:differentorbitsweighted}
Modify Example \ref{ex:differentorbits} by changing the transition probabilities. The new transition rule is to replace the first coordinate with probability $\frac{1}{2}$ and to replace the last three coordinates with probability $\frac{1}{2}$, choosing uniformly from edges in those orbits in each case.

This modified walk is close to mixed as soon as each type of edge has been chosen at least once, which takes on average two steps.
\end{Example}

\begin{Example}\label{ex:bottleneckweighted}
Modify Example \ref{ex:bottleneck} as follows
\begin{itemize}
\item From a red vertex, move to either a random red vertex or to the green vertex, each with probability $\frac{1}{2}$. 
\item From a blue vertex, move to either a random blue vertex or to the green vertex, each with probability $\frac{1}{2}$.
\item From the green vertex, move to a random other vertex.
\end{itemize}
As with Example \ref{ex:differentorbitsweighted}, this walk has been modified to choose edge orbits uniformly, rather than with probability proportional to their size. Unlike that example, in this case the modification has changed the stationary distribution, drastically increasing the probability of being at the green vertex. 

This modified walk is close to mixed as soon as it has taken a step from the green vertex, and this takes on average three steps. 
\end{Example}

\subsection{Mixing times of \texorpdfstring{$\FI$--graphs}{FI--graphs}: General case}
\label{sec:mixingpf}

In this section, we will show that general finitely generated \fg s behave in the same way as our examples in Section \ref{sec:mixingex} --- that is, that they may have large mixing times, but only due to the relative probabilities of steps along various orbits of edges. Adjusting for these frequencies, as in Examples \ref{ex:differentorbitsweighted} and \ref{ex:bottleneckweighted}, gives mixing times which do not grow with $n$. The following definition generalizes those examples.

\begin{Definition}\label{def:weightedwalk}
Let $G_\dt$ be an \fg. For each $n$, the weighted walk on $G_n$ is defined as follows. If the current vertex is $v$, then consider all edges containing $v$. Choose one of those edge orbits uniformly at random, and then one of those edges uniformly from that orbit.
\end{Definition}

We will also want to consider which vertex orbit our random walk is at.

\begin{Definition}\label{def:orbitgraph}
For either the simple random walk or the weighted walk on an \fg, the vertex orbits form a quotient Markov chain. This is the \textbf{orbit walk}, or a random walk on the \textbf{orbit graph}, whose vertices and edges are vertex orbits and edge orbits. That is, the information of which vertex orbit the walk is at at time $t$ is sufficient to determine the probability distribution on vertex orbits at time $t+1$.
\end{Definition}

It will be convenient to consider a more concrete modification of our \fg s. As discussed at the start of Section \ref{sec:finfi}, Theorem A of \cite{RSW} says that the vertices of a finitely generated \fg{} are ordered tuples, identified under the action of a permutation group. If we just neglect to identify any such vertices, then we have a larger graph whose vertices are ordered tuples, with any two connected by an edge whenever their images in the actual \fg{} are. The random walk on the actual \fg{} is a quotient Markov chain of the random walk on the larger graph, so any upper bound on the mixing time of the larger random walk also bounds the mixing time for the random walk on our actual \fg. When we study simple random walks on the smaller graph, these are not necessarily the image of simple random walks on the large graph, so we may still need to work with unequal transition probabilities.

In the following analysis we shall derive upper bounds using this simplification, working with \fg s whose vertices are just ordered tuples.

We will also need to track particular edge labels.

\begin{Definition}\label{def:augorbitgraph}
For any $k \in [n]$, the \textbf{augmented orbit graph} is defined in the same way as the orbit graph (Definition \ref{def:orbitgraph}), except that instead of vertices being vertex orbits, they are vertex orbits together with either the position of $k$ within the label of the present vertex, or the information that $k$ does not occur in this label.
\end{Definition}

\begin{Theorem}\label{the:orbitwalkmixing}
For any finitely generated \fg{} which is eventually not bipartite, the weighted walk on $G_n$ has mixing time which is constant in $n$, in the sense of Remark \ref{rem:mixingeps}.
\end{Theorem}
\begin{proof}
Essentially, this result is because the behavior of the weighted walk does not depend on $n$, so the time taken for events like ``the walk has replaced all of its starting labels and is now very close to random'' also does not depend on $n$.

The condition that the \fg{} be eventually not bipartite is to guarantee that the weighted walk is aperiodic. 

As discussed in Section \ref{sec:finfi}, to show that the weighted walk has mixed we need to deal with mixing on the set of vertex orbits and with mixing of the vertex labels.

Let $N$ be large enough that $G_N$ contains representatives of each edge orbit and vertex orbit of any $G_n$. Take any $n$ with $n \geq 2N$ and consider the weighted walk on $G_n$.  

We will couple two copies of this walk. 

Firstly, consider the two walks on the orbit graph, ignoring the vertex labels. This is a finite graph which does not depend on $n$ (once $n \geq N$), so there is a coupling whose coupling time does not depend on $n$ (It is important here that we are considering the weighted walk, so the transition probabilities do not depend on $n$. If we were working with the simple random walk on the original \fg{}, then the transition probabilities for the induced walk on the orbit graph could depend on $n$. This coupling also requires that this walk on the orbit graph is aperiodic, which follows from the \fg in question not being bipartite). Couple the two walks together according to this coupling until they are at the same vertex orbit. Let the time at which this happens be $T$. After this time, couple the two chains so that they move along the same edge orbits, with any new labels to be determined momentarily.

It remains to couple two chains which start at vertices in the same orbit, $V$, but with different labels. Let these vertices be $u_0$ and $v_0$, and $f_0$ a bijection from $[n]$ to $[n]$ which takes $u_0$ to $u_1$ when applied to the labels. Among such bijections $f_0$, choose one which fixes as many elements of $[n]$ as possible --- this will be at least $n - 2N$. Essentially, $f_0$ pairs labels of $u_0$ with similarly-positioned labels of $v_0$, and vice versa.

As our two chains $u_t$ and $v_t$ progress, we will couple them so that the function $f_t$ moves closer and closer to being the identity. At time $t$, the first chain moves from $u_{t-1}$ to $u_t$, and the second moves from $v_{t-1} = f_{t-1}(u_{t-1})$ to $v_t := f_{t-1}(u_t)$. Next, we define the function $f_t$. We could define $f_t$ to be equal to $f_{t-1}$, but we may be able to achieve $v_t = f_t(u_t)$ with a choice of $f_t$ which is closer to the identity --- as measured by its number of fixed points. As with the initial choice of $f_0$, just choose $f_t$ arbitrarily from among bijections on $[n]$ which take $u_t$ to $v_t$ and have the most possible fixed points. This will produce an $f_t$ with at least as many fixed points as $f_{t-1}$, because $f_{t-1}$ was one possible choice for $f_t$.

Once $f_t$ is the identity, then $u_t = v_t$ and the two chains have coupled. Observe that if $f_t$ fixes the label $i$, then all later functions $f_{t + t'}$ also fix $i$. Thus it suffices to wait until each label has been fixed by some $f_t$. Also, if the label $i$ does not label either $u_t$ or $v_t$, then $f_t$ fixes $i$, so the coupling time is bounded above by the time taken for each element of $[n]$ labeling $u_0$ (equivalently, $v_0$) to at some point be removed from the sets of labels of $u_t$ and $v_t$ (even if it later returns, because after such a label returns it will forevermore be in the same position labeling $u_t$ and $v_t$). 

All that remains is to, for each label $i$, bound the time until that label is removed. If each of these times have bounds which do not depend on $n$, then the time until this has happened for each of up to $2N$ labels has bounds which do not depend on $n$, which completes the proof. We shall give appropriate bounds in a moment as Proposition \ref{prop:constremoval}. 
\end{proof}

\begin{Proposition}
\label{prop:constremoval}
Let $\eps$ be any small constant and $k \in [n]$ be any label. Then for the coupling considered in the proof of Theorem \ref{the:orbitwalkmixing}, there is a constant $T_\eps$ so that with probability at least $1-\eps$ there is some $t < T_\eps$ so that at time $t$, neither chain has $k$ labeling its present state. In the notation of the previous proof, $k$ is not a label of $u_t$ or $v_t$. The constant $T_\eps$ does not depend on $n$. 
\end{Proposition}
\begin{proof}
For this proof, we will work with the augmented orbit graph of Definition \ref{def:augorbitgraph} rather than the orbit graph. We may see this as just keeping track of an additional piece of information --- the position of the label $k$, if any --- as we run the random walk. Note that there is a natural projection from the augmented orbit graph to the orbit graph, and that each vertex of the latter has only finitely many preimages. 

From any vertex of the augmented orbit graph (a vertex orbit and a choice of position for $k$), it is possible to get to a state where the next move may remove $k$ from the labels of $v_t$, and then to make that move. Because the (augmented) orbit graph is finite and connected, this becomes arbitrarily likely as we take more steps. After this, it is possible to do the same to remove $k$ from the labels of $u_t$. When this is accomplished, there is at most a probability of $\frac{N}{n} \leq \frac{1}{2}$ that $k$ has reappeared as a label of $v_t$, because each possible label is equally likely to be added, and $v_t$ has at most $N$ labels out of $n$ possible. 

If $k$ is now labeling neither $u_t$ nor $v_t$, then we are done. Otherwise, we proceed until $k$ is not a label of $v_t$, alternating these steps as necessary. The number of iterations required is at most a geometric random variable with parameter $\frac{1}{2}$. 

The claim follows by putting together these estimates. For any $\epsilon$, choose $m$ so that $$\Pr(\Geom(\frac{1}{2}) < m) \geq 1 - \frac{\eps}{2},$$ and then choose $T_\eps$ large enough that for any possible starting locations of the label $k$ in $u_0$ and/or $v_0$, the time until $k$ does not label $v_t$ is less than $\frac{T_\eps}{m}$ with probability at least $1 - \frac{\eps}{2m}$. The probability that $k$ still labels either $u_t$ or $v_t$ at time $T_\eps$ is at most $\frac{\eps}{2} + m\frac{\eps}{2m}$, completing the proof. 
\end{proof}

Theorem \ref{the:orbitwalkmixing} shows that the mixing times of Examples \ref{ex:differentorbits} and \ref{ex:bottleneck} are due not to the structure of the vertex and edge orbits, but rather to the relative sizes of the various edge orbits, which make some orbits much more or less likely than others. We will now bound the mixing time of general reversible random walks (models of random walks on our \fg) by comparing their transition probabilities to those of the weighted walk. In the case of the simple random walk, this depends on how much the orbit sizes differ from one another.

Firstly, let us describe a construction of a general reversible random walk on a finite undirected graph $G$, as in Chapter 2 of \cite{LP}, for instance. Assign a weight $w_e$ to each edge $e$. To take a step from any vertex $v$, choose a neighboring edge at random, each edge chosen with probability proportional to its weight, and move along that edge. Let $\gamma$ be the sum of $w_e$ over all edges, with edges between distinct vertices counted twice. For each vertex $v$, the stationary probability $\pi(v)$ is equal to the sum of $w_e$ over neighbors $e$ of $v$, divided by $\gamma$.

We will need a measure of the greatest difference between transition probabilities in a random walk and the corresponding probabilities in the weighted walk.

\begin{Definition}
Let $G_\dt$ be a finitely generated \fg{}, and consider any model $(X_t,P)$ of a random walk on $G_\dt$. Let $P$, $\pi$, $\gamma$, and $w_e$ refer to the respective quantities for the random walk $P$, and $P'$, $\pi'$, $\gamma'$, and $w'_e$ refer to those quantities for the weighted walk.
\end{Definition}

\begin{Definition}\label{def:rho}
For any edge $e$ of $G_n$, consider the ratio $\frac{w_e / \gamma}{w'_e / \gamma'}$. Define the ratio $\rho(n)$ to be the smallest value of this ratio over all choices of $e$.
\end{Definition}

Notice that the definition of $\rho(n)$ compares corresponding transition probabilities, but also depends on the weights assigned elsewhere. We could remove the dependence on $\gamma$ by requiring that the weights sum to $1$ over the entire graph, so $\gamma = \gamma' = 1$, but this restriction makes it more difficult to imagine the effect of a local change.

In the case of the simple random walk, this ratio may be expressed in terms of vertex degrees and edge orbits.

\begin{Remark}\label{rem:simpleex}
Let $G_\dt$ be a finitely generated \fg{}. For the simple random walk on $G_n$, let $N$ the number of pairs of a vertex $v$ and an edge orbit containing $v$, $\Ob_e$ the set of edges containing $v$ from any one of these edge orbits, and $E$ be the set of edges. The ratio $\rho(n)$ for the simple random walk is the smallest value of the ratio $\frac{|\Ob_e|\times N}{2|E|}$ over all choices of $v$ and $\Ob_e$.
\end{Remark}

Notice that when $\rho(n)$ is equal to $1$, the transition probabilities are equal to those of the weighted walk, so the walk in question is the weighted walk. The ratio $\rho(n)$ is always at most $1$, and smaller values indicate the existence of transitions which are much less likely than they would be in the weighted walk. 

In the case of the simple random walk, $\rho(n)$ being less than $1$ indicates the existence of at least one edge orbit which has comparatively few edges, and thus is less likely to be chosen by the simple random walk compared to the weighted walk.  

It also happens that $\rho(n)$ is eventually equal to a rational function in $n$, by results in Section 4 of \cite{RW}. A good example for the following results is Example \ref{ex:differentorbits} compared to Example \ref{ex:differentorbitsweighted} --- in Example \ref{ex:differentorbits}, the smaller edge orbit has only $\frac{1}{n^2}$ as many edges as the other. Thus $\rho(n)$ is proportional to $\frac{1}{n^2}$, and the following results will show that this means that Example \ref{ex:differentorbits} has a mixing time at most proportional to $n^2$, agreeing with our previous calculations.

\begin{Theorem}\label{the:fimixing}
If $G_\dt$ is a finitely generated \fg{} which is eventually not bipartite, then a general model of a random walk on $G_n$ has mixing time bounded above by a constant times $\rho(n)^{-1}$. 
\end{Theorem}
\begin{proof}
The proof is the same as that of Theorem \ref{the:orbitwalkmixing}, but with weights on the edges of the orbit graph, which are not equal to one another and which may depend on $n$. The only change required to that proof is in the estimation of how long it will take until each label has at some point been unused. We will prove the appropriate estimate --- an analogue of Proposition \ref{prop:constremoval} --- as Proposition \ref{prop:constremoval2}. 
\end{proof}

The following result is the same as that of Proposition \ref{prop:constremoval}, except that the bound is a constant multiple of $\rho(n)^{-1}$, rather than a constant.

\begin{Proposition}
\label{prop:constremoval2}
When the proof of Theorem \ref{the:orbitwalkmixing} is applied to the setting of Theorem \ref{the:fimixing}, then we have the following estimates.

Let $\eps$ be any small constant and $k \in [n]$ be any label. Then there is a constant $T_\eps$ so that with probability at least $1-\eps$ there is some $t < T_\eps \rho(n)^{-1}$ so that at time $t$, neither chain has $k$ labeling its present state. In the notation of the previous proof, $k$ is not a label of $u_t$ or $v_t$. The constant $T_\eps$ does not depend on $n$. 
\end{Proposition}
\begin{proof}
The proof is the same as that of Proposition \ref{prop:constremoval}, except that the time until there is a certain probability that the label $k$ has been removed from either one of $u_t$ or $v_t$ will be (at most) proportional to $\rho(n)^{-1}$, rather than being constant in $n$. This is because, compared to the random walk of Theorem \ref{the:orbitwalkmixing}, steps may have been made less likely by a factor of up to $\rho(n)$. 

Bounds of the desired form follow from, for example, the Commute Time Identity (see Corollary 2.21 of \cite{LP}) and Markov's inequality (to use the commute time to bound the time taken until a certain edge is traversed, subdivide that edge). Recall that we are working in the augmented orbit graph.

In more detail, let $x$ be the starting state. Choose an edge corresponding to the removal of the label $k$, and subdivide it, producing a new vertex $y$. Let $\mathcal{C}(x,y)$ be the conductance between $x$ and $y$. The Commute Time Identity gives that the expected time for the label $k$ to be removed is at most $\frac{\gamma}{\mathcal{C}(x,y)}$. But we know that $\frac{\gamma'}{\mathcal{C}'(x,y)}$ does not depend on $n$, because this is a commute time for a random walk which does not depend on $n$. Moving from the conductance $\mathcal{C}'$ to $\mathcal{C}$, each individual conductance has decreased by at most a factor of $\rho(n)^{-1}\frac{\gamma'}{\gamma}$. This means that $$\mathcal{C} \geq \mathcal{C'}\rho(n)^{-1}\frac{\gamma'}{\gamma}.$$ Therefore this expected commute time between $x$ and $y$ on the (augmented) orbit graph for the random walk $P$ is at most $\rho(n)^{-1}$ times the corresponding expected commute time for the weighted walk, which did not depend on $n$. Markov's inequality completes the proof.
\end{proof}

\begin{Corollary}\label{cor:srw}
If $G_\dt$ is a finitely generated \fg{} which is eventually not bipartite, then the simple random walk on $G_n$ has mixing time bounded above by a constant times $\rho(n)^{-1}$, where $\rho(n)$ is given in Remark \ref{rem:simpleex}.
\end{Corollary}

Theorem \ref{the:fimixing} applies to random walks on \fg s with edges which may have arbitrary weights, which may even depend on $n$, but which must be undirected. If directed edges are allowed, then the random walk may have much worse mixing times. The following example of a directed random walk illustrates this behavior.

\begin{Example}
Let $G_n$ have one red vertex, one orange vertex, one yellow vertex, and a large number of green vertices. Let $p$ be a probability, and consider the following random walk.
\begin{itemize}
\item From the red vertex, move to the orange vertex with probability $p$, else stay put.
\item From the orange vertex, move to the yellow vertex with probability $p$, else move to the red vertex.
\item From the yellow vertex, move to a random green vertex with probability $p$, else move to the red vertex.
\item (Movement from the green vertex is unimportant for this example)
\end{itemize}
When $p=\frac{1}{2}$, the mixing time is a constant. If we change $p$ to $\frac{1}{n}$, then we might hope that with no (directed) edge orbit having changed in probability by a factor of more than $n$, the mixing time might be linear in $n$, as was the case when we moved from Theorem \ref{the:orbitwalkmixing} to Theorem \ref{the:fimixing}. The mixing time is actually cubic, because it takes $O(n^3)$ steps to get to any of the green vertices. 
\end{Example}

This example shows why Theorem \ref{the:fimixing} requires that the random walk be reversible.  

A natural next question about the mixing of (simple) random walks on \fg s is whether or not they exhibit cutoffs. It is shown in \cite{PS} that the Kneser graphs $KG(2n+k,n)$ have cutoff when $k = O(n)$. Theorem A of \cite{RSW} may be understood as saying that arbitrary finitely generated \fg s are essentially built from slightly-generalized Kneser graphs, so it is reasonable to ask whether arbitrary finitely generated \fg s have cutoffs. The answer to this question is no, as illustrated by the following example.

\begin{Example}\label{nocutoff}
Let $G_n$ have $n$ red vertices and $n$ blue vertices, each indexed by the elements of $[n]$. Each vertex is connected to each other vertex of the same color, and also to the oppositely-colored vertex with the same label.

After one step, the walk is within $\frac{1}{2}$ of uniform, but the mixing time is at least linear in $n$, because the probability of moving to a vertex of a different color is only $\frac{1}{n}$. Therefore this random walk does not have cutoff.
\end{Example}

It would be interesting to know whether there are conditions on finitely generated \fg s which guarantee that the simple random walk will have cutoff.

\section{Markov chains on \texorpdfstring{$\FI$}{FI}-sets: Hitting times}

\subsection{Roofed orbit walks}

One tool, which will be invaluable in what follows, is to associate to a connected Markov chain on $Z_\dt$ a new chain on orbits as discussed in the following definition.

\begin{definition}\label{orbitgraph}
Let $Z_\dt$ be a finitely generated $\FI$--set, and let $(X_{t,\dt},P)$ be a connected Markov chain on $Z_\dt$. Viewing $P_n$ as a matrix on $\RR Z_n$, we observe that for any $x,y \in Z_n$ and any $\sigma \in \Sn_n$, one has
\[
P_n(x,y) = P_n(\sigma x, \sigma y),
\]
from the fact that $P_n$ is the specialization of a virtual relation on $Z_\dt$. Therefore the value of $P_n$ is only dependent on the stable orbit of $Z_\dt \times Z_\dt$ that the pair $(x,y)$ belongs to. If $\Ob$ is a stable orbit of $Z_\dt \times Z_\dt$, we will often write $P_n(\Ob_n)$ to denote $P_n(x,y)$ where $(x,y) \in \Ob_n$.

Fix any $m \gg 0$, as well as some $x \in Z_m$. Then we define a family of directed graphs $\{G^x_n\}_{n \geq m}$, called the \textbf{$x$-roofed orbit graph} in the following way:
\begin{itemize}
\item The vertices of $G^x_n$ are labeled by orbits of $Z_n \times Z_n$ which have a representative of the form $(y,x(n))$, where $y \in Z_n$ and $x(n) = Z(\iota_{m,n})(x)$ for $\iota_{m,n}:[m] \hookrightarrow [n]$ the standard injection
\item The vertices $\Ob_n$ and $\Ob_n'$ are connected if there exists $y,z \in Z_n$ such that $(y,x(n)) \in \Ob_n$, $(z,x(n)) \in \Ob_n'$, and $P_n(y,z) > 0$.
\end{itemize}
As a convenient shorthand, we will often write $[z,x(n)]$ to denote the orbit $\Ob_n$ which $(z,x(n))$ is a member of. If a specialization of a stable orbit appears as a vertex of $G_n^x$, then we say that the stable orbit is \textbf{roofed at $x$}, or \textbf{$x$-roofed}.

For each $n \geq m$ we can define a connected Markov chain on $G^x_n$, $(X^x_{t,n},P_n^x)$, by setting
\[
P^x_{n}([z,x(n)],[y,x(n)]) := \sum_{(w,x(n)) \in [y,x(n)]} P_n([z,w])
\]
The fact that this is a connected Markov chain follows from the fact that $P_n$ was the transition matrix of a connected Markov chain on $Z_n$.
\end{definition}

Given a connected Markov chain on a finitely generated $\FI$--set, the defined Markov chains on the orbit graphs can be thought of as encoding the probability that one moves from a given roofed orbit to another one while performing the original Markov chain. This is illustrated with the following example.

\begin{example}
Let $Z_\dt$ be the $\FI$--set with $Z_n = [n]$, and let $(X_{t,\dt},P)$ be the Markov chain on the complete graphs encoding the simple random walk. That is, for each $n$, and each $i,j \in [n]$,
\[
P_n(i,j) = \begin{cases} \frac{1}{n-1} &\text{ if $i \neq j$}\\ 0 &\text{ otherwise.}\end{cases}
\]
In this case there is are two stable orbits of $Z_\dt \times Z_\dt$, pairs of non-equal points and pairs of equal points. We denote these stable orbits by $\Ob^{\neq}$ and $\Ob^{=}$, respectively. Then for $n \geq 3$ the graph $G^1_n$ has two vertices, labeled by $\Ob^{\neq}_n$ and $\Ob^{=}_n$, an edge connecting them, and a loop on the vertex $\Ob^{\neq}_n$. Note that in this case the action of $\Sn_n$ on $Z_n$ is transitive, and so this is the only roofed orbit graph. We also have
\begin{eqnarray*}
P_n^1(\Ob^{=}_n,\Ob^{=}_n) = 0\\
P_n^1(\Ob^{\neq}_n,\Ob^{=}_n) = \frac{1}{n-1}\\
P_n^1(\Ob^{=}_n,\Ob^{\neq}_n) = 1\\
P_n^1(\Ob^{\neq}_n,\Ob^{\neq}_n) = \frac{n-2}{n-1}
\end{eqnarray*}
Thus, the entries of $P_n^1$ are rational functions in $n$. Our first main lemma will show that this is always the case.
\end{example}

\begin{lemma}\label{roofedorbit}
Let $Z_\dt$ be a finitely generated $\FI$--set with a connected Markov chain $(X_{t,\dt},P)$, and let $G_n^x$ and $P_n^x$ be as in Definition \ref{orbitgraph}. Then for any two stable orbits $\Ob,\Ob'$ of $Z_{\dt} \times Z_{\dt}$ which are roofed at $x$, the function
\[
n \mapsto P_n^x(\Ob_n,\Ob'_n)
\]
agrees with a rational function for all $n \gg 0$.
\end{lemma}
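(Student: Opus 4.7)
The plan is to reduce the claim to the known rationality results for virtual relations. Since $P$ is a transition relation on $Z_\dt$, it admits a virtual relation expansion $P = \sum_{\mathcal{Q}} a_{\mathcal{Q}} r^{\mathcal{Q}}$ indexed by the (finitely many) stable orbits $\mathcal{Q}$ of $Z_\dt \times Z_\dt$, with coefficients $a_{\mathcal{Q}} \in K = \RR(n)$. Consequently, for $n \gg 0$ and any $(u, w) \in Z_n \times Z_n$, the entry $P_n(u, w)$ equals $a_{\mathcal{Q}}(n)$, where $\mathcal{Q}_n$ is the unique orbit containing $(u, w)$. Fixing once and for all a representative $z$ with $(z, x(n)) \in \Ob_n$ (independence of this choice is built into the definition of $P_n^x$), the defining sum becomes
\[
P_n^x(\Ob_n, \Ob'_n) = \sum_{\mathcal{Q}} a_{\mathcal{Q}}(n)\, N_{\mathcal{Q}}(n),
\]
where $N_{\mathcal{Q}}(n) := |\{ w \in Z_n : (w, x(n)) \in \Ob'_n,\ (z, w) \in \mathcal{Q}_n \}|$. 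Since the sum runs over a finite index set and each $a_{\mathcal{Q}}$ is rational, it suffices to show that each $N_{\mathcal{Q}}(n)$ agrees with a rational function for $n \gg 0$.

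To that end, I would form the auxiliary $\FI$-set
\[
Y_n := \{(z', x', w') \in Z_n \times Z_n \times Z_n : (z', x') \in \Ob_n,\ (w', x') \in \Ob'_n,\ (z', w') \in \mathcal{Q}_n\}.
\]
Viewing $\Ob_\dt, \Ob'_\dt, \mathcal{Q}_\dt$ as $\FI$-subsets of $Z_\dt \times Z_\dt$ (which they are, at least for $n \gg 0$, via the convention following Theorem \ref{mainstructurethm}(4)), the set $Y_\dt$ is the intersection of the pullbacks of $\Ob_\dt, \Ob'_\dt, \mathcal{Q}_\dt$ along the three coordinate projections $Z_\dt^{\times 3} \to Z_\dt \times Z_\dt$, and is in particular an $\FI$-subset of $Z_\dt^{\times 3}$. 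The Noetherian property (Theorem \ref{mainstructurethm}(1)) then forces $Y_\dt$ to be finitely generated, and polynomial stability (Theorem \ref{mainstructurethm}(2)) gives that $n \mapsto |Y_n|$ agrees with a polynomial for $n \gg 0$.

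The remainder is a double-counting over the middle coordinate. The projection $Y_n \to Z_n$ onto the $x'$-coordinate is $\Sn_n$-equivariant with image $\Sn_n \cdot x(n)$, and every fiber has the same cardinality; the one over $x(n)$ equals $|\{z' : (z', x(n)) \in \Ob_n\}|\cdot N_{\mathcal{Q}}(n)$, the second factor being independent of the choice of $z'$ because the conditions carving out $N_{\mathcal{Q}}(n)$ are preserved by the stabilizer of $x(n)$. A further application of orbit-stabilizer to the projection $\Ob_n \to \Sn_n \cdot x(n)$ gives $|\{z' : (z', x(n)) \in \Ob_n\}| = |\Ob_n|/|\Sn_n \cdot x(n)|$, and these combine to the clean identity
\[
|Y_n| \;=\; |\Ob_n| \cdot N_{\mathcal{Q}}(n), \qquad \text{hence} \qquad N_{\mathcal{Q}}(n) \;=\; \frac{|Y_n|}{|\Ob_n|}.
\]
Both $|Y_n|$ and $|\Ob_n|$ are eventually polynomial by Theorem \ref{mainstructurethm}(2), and $|\Ob_n|$ is eventually nonzero, so $N_{\mathcal{Q}}(n)$ is eventually rational, completing the proof. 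The main obstacle is really just verifying that $Y_\dt$ is a genuine $\FI$-subset (so the structural theorems of \cite{RSW,RW} apply) and correctly setting up the orbit-counting; once those are in hand, the argument is essentially packaging.
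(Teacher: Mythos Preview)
Your argument is correct and follows essentially the same route as the paper: expand $P_n^x(\Ob_n,\Ob'_n)$ according to the virtual-relation decomposition of $P$, reduce to showing the counts $N_{\mathcal{Q}}(n)$ are eventually polynomial, and prove this by realizing them as $|Y_n|/|\Ob_n|$ for an auxiliary $\FI$-subset $Y_\dt \subseteq Z_\dt^{\times 3}$. The paper is terser---it simply declares the last step ``similar to the proof of Proposition~\ref{relpoly}''---whereas you spell out the orbit--stabilizer bookkeeping explicitly; your verification that $N_{\mathcal{Q}}(n)$ is independent of the chosen representative $z$ (via transitivity of $\mathrm{Stab}(x(n))$ on $\{z':(z',x(n))\in\Ob_n\}$) is exactly the content the paper is suppressing.
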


\begin{proof}
Write $\Ob_n = [z,x(n)]$ for some $z \in Z_n$ Then we have,
\[
P_n^x(\Ob_n,\Ob'_n) = \sum_{(y,x(n)) \in \Ob'_n} P_n([z,y])
\]
Gathering those $y'$ for which $(y',x(n)) \in \Ob'_n$ and $[z,y] = [z,y']$, the above sum can be written
\[
\sum_{\Ob'' \text{ roofed at } x} |\{y' \mid (y',x(n)) \in \Ob'_n, [z,y'] = \Ob_n''\}| P_n(\Ob_n'')
\]
By assumption, $n \mapsto P_n(\Ob_n'')$ agrees with a rational function, as $P$ is a transition relation. We claim that $n \mapsto |\{y' \mid (y',x(n)) \in \Ob'_n, [z,y'] = \Ob''\}|$ agrees with a polynomial. Indeed, the proof of this is similar to that given in the proof of Proposition \ref{relpoly}.
\end{proof}

As a consequence of this lemma, we see that $P$ can be viewed as a single matrix, as opposed to a collection thereof, over a $K$ vector space. In particular, the construction of the roofed orbit graphs allows us to reduce questions about the infinitely many Markov chains $\{X_{t,n}\}_{n \geq 0}$ to linear algebraic questions of a matrix. This general philosophy, to reduce the behaviors of an infinite collection of objects to the behavior of a single object is woven through the study of $\FI$-sets, and $\FI$-modules.

\begin{definition}
Let $Z_\dt$ be a finitely generated $\FI$--set, and let $(X_{t,\dt},P)$ be a connected Markov chain on $Z_\dt$. Fix some $m \gg 0$, and an element $x \in Z_m$. We say that a stable orbit $\Ob$ of $Z_\dt \times Z_\dt$ is \textbf{$x$-roofed} if for some - and therefore all - $n \geq m$ there is some $y \in Z_n$ such that $(y,x(n)) \in \Ob_n$, where $x(n) = Z(\iota_{m,n})(x)$.

Using the previous paragraph's notation, we define a $K$ vector space $V^{x}_{Z_\dt}$ to be the $K$-linearization of the set of $x$-roofed orbits. Lemma \ref{roofedorbit} implies that the transition matrix $P$ induces a linear endomorphism of $V^{x}_{Z_\dt}$, which we denote by $P^x$.
\end{definition}

\subsection{Hitting times of Markov chains on $\FI$-sets}\label{hittingtime}

In this section we concern ourselves with how the hitting times of a connected Markov chain $X_{t,\dt}$ on a finitely generated $\FI$--set $Z_\dt$ vary with $n$. Our main result will state that these quantities eventually agree with rational functions. The key technique we will use to accomplish this is reducing the problem to a finite problem by using roofed orbit graphs (Definition \ref{orbitgraph}).

\begin{definition}
Let $Z_\dt$ denote a finitely generated $\FI$--set, and let $(X_{t,\dt},P)$ denote a Markov chain on $Z_\dt$. Then we will write $Q_n:\RR Z_n \rightarrow \RR Z_n$ to denote the hitting time matrix with entries
\[
Q_n(x,y) = \mathbb{E}(\tau_{y,n} \mid X_{0,n} = x)
\]
where $\tau_{y,n}$ is the hitting time of $y \in Z_n$ with respect to $(X_{t,n},P_n)$. Note that $Q_n$ is $\RR[\Sn_n]$-linear. As before, we may therefore make sense of $Q_n(\Ob_n)$ whenever $\Ob$ is a stable orbit of $Z_\dt \times Z_\dt$.
\end{definition}

\emph{For the remainder of this section, we fix a finitely generated $\FI$--set $Z_\dt$ and a connected Markov chain $(X_{t,\dt},P)$ on $Z_\dt$}.

\begin{lemma}\label{matrixeqn}
For $n \gg 0$, let $x, y \in Z_n$ be distinct. Then one has,
\[
Q_n(x,y) = 1+ \sum_{[z,y]} P^y_n([x,y],[z,y])Q_n([z,y])
\]
where the sum is over all $y$-roofed orbits, and $P^y_n$ is the matrix as in Definition \ref{orbitgraph}
\end{lemma}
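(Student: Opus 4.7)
The plan is to start from the classical first-step recursion of Lemma \ref{hitrecursion} and then repackage the sum by grouping terms according to the $\Sn_n$-orbit of the pair $(z,y)$. Concretely, applied to the Markov chain $(X_{t,n},P_n)$, Lemma \ref{hitrecursion} gives
\[
Q_n(x,y) \;=\; 1 + \sum_{z \in Z_n} P_n(x,z)\, Q_n(z,y).
\]
Every $z \in Z_n$ produces a pair $(z,y)$ which lies in exactly one orbit $[z,y]$ of $Z_n \times Z_n$, and since $y = y(n)$ is in the image of $y$ under the inclusion, this orbit is by definition $y$-roofed. So the sum above can be reorganized as a double sum, the outer one over $y$-roofed orbits $\Ob' = [z,y]$ and the inner one over those $z' \in Z_n$ with $(z',y) \in \Ob'_n$.

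The key observation I would then invoke is that $Q_n$ is constant on orbits of $Z_n \times Z_n$. This follows from the fact that $P_n$ commutes with the $\Sn_n$-action, hence so does each $P_n^t$, and therefore the law of the hitting time $\tau_{y,n}$ started at $z$ depends only on the orbit of the pair $(z,y)$; in particular $Q_n(z',y)$ takes a common value $Q_n(\Ob')$ on each orbit $\Ob'$. This allows me to pull $Q_n(\Ob')$ out of the inner sum and reduce to the elementary identity
\[
\sum_{\substack{z' \in Z_n \\ (z',y) \in \Ob'_n}} P_n(x,z') \;=\; P^y_n\bigl([x,y],\Ob'\bigr),
\]
which is nothing more than the definition of $P_n^y$ in Definition \ref{orbitgraph} (with the roles of the ``source'' orbit $[x,y]$ and the ``target'' orbit $[z,y]$ suitably identified: we are summing the $(x,z')$-transition probabilities over all $z'$ for which $(z',y)$ sits in $\Ob'_n$).

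Combining the two reductions gives exactly
\[
Q_n(x,y) \;=\; 1 + \sum_{\Ob' \text{ $y$-roofed}} P^y_n\bigl([x,y],\Ob'\bigr)\, Q_n(\Ob'),
\]
which is the stated identity once we write $\Ob' = [z,y]$. There is no real obstacle here: the whole content is the interplay of the first-step recursion with $\Sn_n$-equivariance, and the most delicate point is simply making sure the orbit-bookkeeping matches the slightly asymmetric definition of $P_n^y$ (the sum is taken over $w$ with $(w,y) \in \Ob'_n$, not over all $w$ in the projection of $\Ob'_n$). Once this is checked, the proof is essentially a one-line rearrangement.
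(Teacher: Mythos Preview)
Your proof is correct and follows essentially the same route as the paper: start from the first-step recursion of Lemma~\ref{hitrecursion}, group the sum by $y$-roofed orbits using that $Q_n$ is constant on orbits, and identify the inner sum with $P^y_n([x,y],[z,y])$. If anything, your identification of the inner sum directly with the definition of $P^y_n$ is slightly cleaner than the paper's, which further expands that sum over $x$-roofed orbits and then appeals to the proof of Lemma~\ref{roofedorbit}.
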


\begin{proof}
Using Lemma \ref{hitrecursion} we have,
\[
Q_n(x,y) = 1 + \sum_{z \neq y} P_n(x,z)Q(z,y)
\]
Using the fact that $Q(z,y) = Q(w,y)$ whenever $[z,y] = [w,y]$, we may gather like terms in this expression to find
\[
Q(x,y) = 1 + \sum_{[z,y] \neq [y,y]} \left( \sum_w P_n(x,w) \right) Q([z,y]),
\]
where the outer sum is over $y$-roofed orbits $[z,y] \neq [y,y]$, and the inner sum is over all  $w \in Z_n$ such that $[z,y] = [w,y]$.

Next we simplify the sum $\sum_w P_n(x,w)$ appearing above. We have that $P_n(x,w) = P_n(x,w')$ whenever $[x,w] = [x,w']$, and so we have
\[
\sum_w P_n(x,w) = \sum_{[x,w]} |\{w' \mid [x,w] = [x,w'], [w',y] = [z,y]\}|  P_n([x,w]),
\]
where the sum is over all $x$-rooted orbits $[x,w]$. The proof of Lemma \ref{roofedorbit} immediately implies our conclusion.
\end{proof}

Lemma \ref{matrixeqn} allows us to approach computing hitting times from a new perspective. Recall that we defined the vector space $V^y_{Z_\dt}$, the $K$-linearization of the $y$-roofed stable orbits of $Z_\dt \times Z_\dt$. Let $\widetilde{V^y_{Z_\dt}}$ be the subspace of $V^y_{Z_\dt}$ spanned by the orbits not equal to the identity orbit. Lemma \ref{matrixeqn} implies that computing the hitting times into $y$ is equivalent to solving the $K$-linear equation
\begin{eqnarray}
Q = \mathbf{1} + P^y|_{\widetilde{V^y_{Z_\dt}}} \cdot Q,\label{solvethis}
\end{eqnarray}
where $\mathbf{1} = \sum_{[z,y] \neq [y,y]} [z,y]$, in the vector space $\widetilde{V^y_{Z_\dt}}$.

\begin{proposition}
The equation (\ref{solvethis}) admits a unique solution $Q$ in  $\widetilde{V^y_{Z_\dt}}$.
\end{proposition}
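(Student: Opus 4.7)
The plan is to view \eqref{solvethis} as a $K$-linear system and show that the relevant operator is invertible. Rewrite the equation as $TQ = \mathbf{1}$, where $T := I - P^y|_{\widetilde{V^y_{Z_\dt}}}$. Because $Z_\dt$ is finitely generated, there are only finitely many $y$-roofed orbits of $Z_\dt \times Z_\dt$ (a consequence of Theorem \ref{mainstructurethm}), so $\widetilde{V^y_{Z_\dt}}$ is a finite-dimensional $K$-vector space and $T$ is represented by a square matrix whose entries lie in $K$ by Lemma \ref{roofedorbit}. The proposition thus reduces to proving $\det T \neq 0$ in $K$.

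Since $\det T$ is itself a rational function of $n$, it suffices to exhibit infinitely many integers at which the specialization $\det T_n$ is nonzero; I would verify this for every sufficiently large $n$ via a spectral radius argument on the substochastic specialization of $P^y_n$. For such an $n$, the row of $P^y_n|_{\widetilde{V^y_{Z_\dt}}}$ indexed by a non-identity orbit $[z, y(n)]$ sums to $1 - P_n(z, y(n)) \leq 1$, so this matrix is row-substochastic. The roofed orbit chain arises as a quotient Markov chain of $(X_{t,n}, P_n)$ under the natural map $Z_n \to \{y\text{-roofed orbits}\}$, so connectedness of the original chain descends and guarantees that from every non-identity orbit there is a positive-probability path to the absorbing orbit $[y(n), y(n)]$. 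This is the standard setup of a finite Markov chain with a single absorbing state, and it forces the iterates of $P^y_n|_{\widetilde{V^y_{Z_\dt}}}$ to tend entry-wise to zero. Hence its spectral radius is strictly less than $1$, and $T_n$ is invertible with Neumann series $T_n^{-1} = \sum_{k \geq 0} \bigl(P^y_n|_{\widetilde{V^y_{Z_\dt}}}\bigr)^{k}$. This yields $\det T_n \neq 0$ for every sufficiently large $n$, whence $\det T \neq 0$ in $K$, and the unique solution is $Q = T^{-1}\mathbf{1}$.

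The main obstacle I anticipate is the careful bookkeeping justifying the quotient-chain picture used in the spectral radius step: one must check that Definition \ref{orbitgraph} really produces a quotient Markov chain of $(X_{t,n}, P_n)$, and that irreducibility of the original chain transfers to the statement that $[y(n), y(n)]$ is reached from every other orbit with positive probability. Once this is in place, the substochastic-matrix analysis and the passage from invertibility at every large $n$ to invertibility over $K$ are routine.
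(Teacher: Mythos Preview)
Your proof is correct and follows essentially the same approach as the paper: both rewrite \eqref{solvethis} as $(I - P^y|_{\widetilde{V^y_{Z_\dt}}})Q = \mathbf{1}$ and argue invertibility by specializing to sufficiently large $n$, where the Markov chain structure forces $1$ not to be an eigenvalue of the restricted matrix. The only cosmetic difference is that the paper invokes Remark~\ref{righteigen} (the $1$--eigenspace of the full stochastic matrix $P^y_n$ is spanned by $\mathbf{1}$, hence no eigenvector can vanish on the identity orbit), whereas you phrase the same fact as the standard spectral-radius bound for a strictly substochastic matrix arising from an absorbing chain; your version is, if anything, a bit more explicit about why deleting the identity row and column genuinely drops the eigenvalue $1$.
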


\begin{proof}
Rewriting (\ref{solvethis}) we find that we must show that the equation
\[
(id - P^y|_{\widetilde{V^y_{Z_\dt}}})Q = \mathbf{1}
\]
has a unique solution. This will follow from showing that $P^y|_{\widetilde{V^y_{Z_\dt}}}$ does not have 1 as an eigenvalue. To see that, recall that for each $n$ $P^y_n$ was the transition matrix of a connected Markov chain. From Remark \ref{righteigen} we know that every eigenvector for the eigenvalue 1 of $P^y_n$ is necessarily supported on every $y$-rooted orbit, including the identity orbit. This implies that $P^y|_{\widetilde{V^y_{Z_\dt}}}$ cannot have 1 as an eigenvalue.
\end{proof}

This proposition is the key fact that we need to prove the main theorem of this section.

\begin{theorem}\label{hittingthm}
There exists a virtual relation $Q$ such that for $n \gg 0$, the hitting time matrix $Q_n:\RR Z_n \rightarrow \RR Z_n$ is the specialization of $Q$ to $n$. In particular, if $\Ob$ is a stable orbit of $Z_\dt \times Z_\dt$, then the map
\[
n \mapsto Q_n(\Ob_n)
\]
agrees with a rational function for all $n \gg 0$.
\end{theorem}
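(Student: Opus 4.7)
The plan is to leverage the linear equation (\ref{solvethis}) and the preceding proposition, reading them as producing a single ``generic'' solution over $K = \RR(n)$ whose coordinates are rational functions of $n$. By Lemma \ref{roofedorbit}, the entries of $P^y|_{\widetilde{V^y_{Z_\dt}}}$ lie in $K$, so the operator $I - P^y|_{\widetilde{V^y_{Z_\dt}}}$ is a matrix over $K$. The preceding proposition guarantees that this operator is invertible over $K$ (since $P^y|_{\widetilde{V^y_{Z_\dt}}}$ does not have $1$ as an eigenvalue). Applying Cramer's rule to the equation $(I - P^y|_{\widetilde{V^y_{Z_\dt}}})Q = \mathbf{1}$ therefore produces a unique solution $Q \in \widetilde{V^y_{Z_\dt}}$ whose coordinates, indexed by the non-identity $y$-roofed orbits, are rational functions in $n$.

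Next I would reconcile this generic $K$-solution with the actual hitting times. For each $n \gg 0$, the vector $(Q_n([z,y(n)]))_{[z,y(n)] \neq [y(n),y(n)]}$ satisfies the $\RR$-specialization of (\ref{solvethis}) by Lemma \ref{matrixeqn}. The same argument used in the preceding proposition --- invoking Remark \ref{righteigen} together with the connectedness of $(X_{t,n},P_n)$ --- shows that the specialized operator $I - P^y_n|_{\widetilde{V^y_{Z_n}}}$ is invertible for all $n \gg 0$. Since the specialization of the generic solution also satisfies the specialized system (away from the finitely many poles of the relevant rational functions), uniqueness forces these two vectors to coincide for all sufficiently large $n$. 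Thus $Q_n([z,y(n)])$ agrees with a rational function in $n$ for every $y$-roofed orbit.

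Finally I would assemble a virtual relation on $Z_\dt$. By Theorem \ref{mainstructurethm}, the orbits of $Z_\dt \times Z_\dt$ stabilize; moreover, applying the construction above to one representative $y$ from each orbit of $Z_\dt$ covers every orbit of $Z_\dt \times Z_\dt$ exactly once via the correspondence $\Ob \leftrightarrow [z,y]$ (with the diagonal orbits contributing the trivial entry $Q_n([y,y]) = 0$). Collecting the rational functions $q_{\Ob}(n) \in K$ produced in the previous paragraph, one for each orbit $\Ob$, and setting
\[
Q := \sum_{\Ob} q_{\Ob}(n) \, r^{\Ob} \;\in\; \R(Z_\dt),
\]
yields a virtual relation whose specialization at $n$ agrees entrywise with $Q_n$ for all $n \gg 0$. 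The ``in particular'' clause is then immediate from the construction.

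The main obstacle I expect is the bookkeeping required to pass between the ``generic'' solution over $K$ and the family of pointwise solutions over $\RR$. Two independent uniqueness statements are in play --- one at the level of $K = \RR(n)$ and one at each fixed $n$ --- and one must argue that the rational-function formula obtained from Cramer's rule is indeed the correct one at all but finitely many $n$ (the exceptional set being the common zero locus of the relevant denominators). This is formally straightforward but is the step where careful attention to specialization is needed.
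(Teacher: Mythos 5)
Your proposal is correct and follows essentially the same route the paper intends: reduce to the finite linear system (\ref{solvethis}) over $K=\RR(n)$, invoke the proposition to invert $I - P^y|_{\widetilde{V^y_{Z_\dt}}}$ over $K$, and match the resulting rational functions to the actual hitting times via specialization and pointwise uniqueness. The paper leaves the assembly of the virtual relation and the specialization bookkeeping implicit (stating only that the proposition ``is the key fact we need''), and you have filled these steps in correctly.
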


\begin{example}
Let $Z_n = [n]$, and let $(X_{t,n},P_n)$ represent the simple random walk on the complete graph. Then it is easily shown that
\[
Q_n(x,y) = \begin{cases} n-1 &\text{ if $x \neq y$}\\ 0 &\text{ otherwise.}\end{cases}
\]

We may also take $Z_n = [n] \sqcup [n]$, and have $(X_{t,n},P_n)$ represent the simple random walk on the complete bipartite graph $K_{n,n}$. Then,
\[
Q_n(x,y) = \begin{cases} 2n-1 &\text{ if $x$ and $y$ are in different parts}\\ 2n &\text{ if $x \neq y$ are in the same part}\\ 0 &\text{ if $x = y$.}\end{cases}
\]
\end{example}

We would next like to use our previously described connections between hitting times and Green's functions to conclude asymptotic information about Green's functions of $\FI$-graphs. Before we can do this, we need to establish some notation.

\begin{definition}
A \textbf{weighted $\FI$-graph} is a pair of an $\FI$-graph $G_\dt$, and a virtual relation $w$ on $V(G_\dt)$, such that for all $n\gg 0$, $(G_n,w_n)$ is a weighted graph. We say that a weighted $\FI$-graph is \textbf{stochastic} if $w_n$ is stochastic for all $n\gg 0$.
\end{definition}

Given a weighted $\FI$-graph, there is an obvious way we might make sense of the normalized Laplacians $\Lap_n$ and Green's functions $\Gre_n$ for $n \gg 0$. This allows us to conclude the following.

\begin{corollary}
Let $G_\dt$ be a weighted graph with weight function $w$. Then if $\Ob$ is a stable orbit of $Z_\dt \times Z_\dt$, the map
\[
n \mapsto \Gre_n(\Ob_n)
\]
agrees with a function which is algebraic over $K$. More specifically, it agrees with a function involving rational functions and possibly square roots of rational functions.
\end{corollary}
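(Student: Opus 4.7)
The plan is to combine Chung--Yau's identity (Theorem \ref{hittinggreen}) with the hitting-time stability result (Theorem \ref{hittingthm}), and then track each factor that appears. First I would reduce to the stochastic case. By Remark \ref{allstoch}, rescaling $w$ by the reciprocal of its total sum produces a stochastic weighted graph. Since this rescaling multiplies both $L_{G_n}$ and the diagonal degree matrix $T$ by the same scalar, the normalized Laplacian $\Lap = T^{-1/2} L T^{-1/2}$ is unchanged, and hence so is $\Gre$. Moreover, the total weight $\sum_{s,t} w_n(s,t)$ is a rational function of $n$: expanding $w$ as a virtual relation and gathering summands by orbits of pairs, each contributing a polynomial number of terms times a rational function, exactly as in the proof of Proposition \ref{relpoly}. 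So we may assume $(G_\dt,w)$ is stochastic and apply Theorem \ref{hittinggreen}.

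Next I would show that each ingredient of the formula
\[
\Gre_n(x,y) = \frac{\sqrt{d_x d_y}}{\mathrm{vol}_n}\left( Q_n(x,y) - \frac{1}{\mathrm{vol}_n}\sum_{z \in V(G_n)} d_z\, Q_n(z,y)\right)
\]
has the right shape. By $\Sn_n$-equivariance of the virtual relation $w$, the weighted degree $d_x = \sum_y w_n(x,y)$ depends only on the orbit of $x$. Gathering the sum over pair-orbits containing $x$, each contributing a polynomial fiber-size times the rational coefficient $w_n(\mathcal{O})$, shows $d_x$ is a rational function of $n$ depending only on the orbit of $x$. Summing over vertex orbits then yields $\mathrm{vol}_n = \sum_x d_x$ as a rational function of $n$. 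By Theorem \ref{hittingthm}, $Q_n(x,y)$ depends only on the orbit of $(x,y)$ and agrees with a rational function of $n$.

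The one piece that requires actual work is the sum $S_n(y) := \sum_{z} d_z\, Q_n(z,y)$. I would group summands by the orbit $\Ob'$ of the pair $(z,y)$. For $y$ in a fixed orbit, the number of $z$ with $(z,y)\in\Ob'_n$ is polynomial in $n$ by the same fiber-counting argument used in Proposition \ref{relpoly}; the factor $d_z$ depends only on the vertex orbit of $z$, which is determined by $\Ob'$; and $Q_n(z,y)$ depends only on $\Ob'$ and is rational by Theorem \ref{hittingthm}. Summing over the finitely many $y$-roofed orbits, $S_n(y)$ becomes rational in $n$ and depends only on the orbit of $y$. This is the main technical step; everything else in the argument follows the template already established in this paper.

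Assembling the pieces, the parenthesized quantity in Chung--Yau's formula is a rational function of $n$ depending only on the orbit of $(x,y)$, while the prefactor $\sqrt{d_x d_y}/\mathrm{vol}_n$ is the square root of a rational function of $n$ (again depending only on the orbits of $x$ and $y$). Thus $n\mapsto \Gre_n(\Ob_n)$ agrees with the product of a rational function and the square root of a rational function, which is algebraic over $K=\RR(n)$ of degree at most two. The hardest step was repackaging the vertex sum $S_n(y)$ as a finite sum indexed by pair-orbits; this is exactly where the finiteness of the orbit set of $Z_\dt\times Z_\dt$ (for finitely generated $Z_\dt$) comes into play.
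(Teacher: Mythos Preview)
Your proof is correct and follows exactly the approach the paper uses: the paper's own proof is the single sentence ``This follows from Theorems \ref{hittingthm} and \ref{hittinggreen} and Remark \ref{allstoch},'' and you have simply unpacked those three ingredients in detail. Your additional observations (that the rescaling of Remark \ref{allstoch} leaves $\Lap$ and hence $\Gre$ unchanged, and the orbit-by-orbit repackaging of $\sum_z d_z Q_n(z,y)$) are the natural verifications one would make when fleshing out the paper's one-line proof.
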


\begin{proof}
This follows from Theorems \ref{hittingthm} and \ref{hittinggreen} and Remark \ref{allstoch}.
\end{proof}

\begin{example}
For all examples we will consider weighted $\FI$-graphs $G_\dt$ with,
\[
w_{n,x,y} = \begin{cases} 1 &\text{ if $\{x,y\} \in E(G_n)$}\\ 0 &\text{ otherwise.}\end{cases}
\]
In the case where $G_\dt = K_\dt$ is the complete graph, Xu and Yau have computed \cite[Example 3.7]{XY}
\[
\Gre_n(x,y) = \begin{cases} \frac{-1}{n^2} &\text{ if $x \neq y$}\\ \frac{n-1}{n^2} &\text{ otherwise.}\end{cases}
\]

Xu and Yau have also computed the Green's function for the star graphs $K_{\star,1}$ \cite[Example 3.8]{XY}. Denote the center of $K_{n,1}$ by $c$, and write $x,y$ for two distinct non-central vertices. Then,
\begin{eqnarray*}
\Gre_n(c,c) = \frac{1}{4(n-1)}, \hspace{.25cm} \Gre_n(x,x) = \frac{4n-7}{4(n-1)},\\
\Gre_n(x,y) = \frac{-3}{4(n-1)}, \hspace{.25cm} \Gre_n(c,x) = \frac{-1}{4(n-1)}.
\end{eqnarray*}
\end{example}

\subsection{Higher moments of hitting time random variables}

In this section we focus on what can be said about the higher moments of hitting time random variables. While our main result will hold for all of the higher moments, for concreteness and simplicity we will limit our exposition to the variance of these random variables. See Remark \ref{highermoments} for an explanation on how the proof of Theorem \ref{varrational} can be generalized to higher moments.

\begin{definition}
Let $X$ denote a random variable on some probability space. We write $\Var(X)$ to denote the \textbf{variance} of $X$, 
\[
\Var(X) = \mathbb{E}(X^2)-(\mathbb{E}(X))^2.
\]

If $Z_\dt$ is a finitely generated $\FI$-set, and $(X_{\dt,t},P_\dt)$ is a Markov chain on $Z_\dt$, then for any orbit of pairs $\Ob$ on $Z_\dt$ and any $n \gg 0$, one may make sense of the quantity
\[
\Var(\tau_{\Ob_n}).
\]
Our objective will be to develop an understanding of the function $n \mapsto \Var(\tau_{\Ob_n})$.
\end{definition}

Our main strategy will be similar to that which we applied to the first moment of the hitting time random variable. Namely, we apply recursive methods and reduce the problem to a finite one on orbits. To this end we recall the following fact.

\begin{theorem}[The Law of Total Variance]
Let $X,Y$ be a random variables on some probability space, and assume that the variance of $X$ is finite. Then,
\[
\Var(X) = \mathbb{E}(\Var(X \mid Y)) + \Var(\mathbb{E}(X \mid Y))
\]
\end{theorem}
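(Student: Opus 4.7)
The plan is to reduce everything to the defining identity $\Var(X) = \mathbb{E}(X^2) - (\mathbb{E}(X))^2$ and then to use the tower property $\mathbb{E}(\mathbb{E}(\cdot \mid Y)) = \mathbb{E}(\cdot)$, which is legitimate here because the hypothesis that $\Var(X)$ is finite forces $X\in L^{2}$ and hence every conditional expectation appearing below is well-defined. The proof amounts to expanding both summands on the right-hand side and checking that the mixed quadratic term cancels.

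First I would apply the definition of conditional variance, $\Var(X \mid Y) = \mathbb{E}(X^{2} \mid Y) - (\mathbb{E}(X \mid Y))^{2}$, and take expectations. The tower property collapses $\mathbb{E}(\mathbb{E}(X^{2} \mid Y))$ to $\mathbb{E}(X^{2})$, so
\[
\mathbb{E}(\Var(X \mid Y)) = \mathbb{E}(X^{2}) - \mathbb{E}\bigl((\mathbb{E}(X \mid Y))^{2}\bigr).
\]
Next I would compute $\Var(\mathbb{E}(X \mid Y))$ directly, treating $\mathbb{E}(X \mid Y)$ as an ordinary random variable and applying the defining identity again:
\[
\Var(\mathbb{E}(X \mid Y)) = \mathbb{E}\bigl((\mathbb{E}(X \mid Y))^{2}\bigr) - \bigl(\mathbb{E}(\mathbb{E}(X \mid Y))\bigr)^{2}.
\]
A second use of the tower property turns $\mathbb{E}(\mathbb{E}(X \mid Y))$ into $\mathbb{E}(X)$, so the second term is $(\mathbb{E}(X))^{2}$.

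Adding the two displays, the terms $\mathbb{E}((\mathbb{E}(X \mid Y))^{2})$ appear with opposite signs and cancel, leaving $\mathbb{E}(X^{2}) - (\mathbb{E}(X))^{2} = \Var(X)$, which is the claim. There is no real obstacle; the only point that requires a sentence of justification is the integrability needed to invoke the tower property, which is handled by the finite-variance hypothesis together with Jensen's inequality applied to the conditional expectation. Since the result is entirely standard, in the actual paper it would likely be stated with a citation rather than proved; the argument above is included only to make the present sketch self-contained.
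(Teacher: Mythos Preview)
Your proof is correct and is the standard argument. The paper does not actually prove this statement; it simply recalls it as a known fact (``To this end we recall the following fact'') and moves on to apply it, exactly as you anticipated in your final sentence.
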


In the context of hitting times, the law of total variance simplifies quite significantly.

\begin{lemma}
Let $(X_t,P)$ denote a Markov chain with state space $Z$, and let $a,b \in Z$ be distinct. Then,
\begin{eqnarray}
\Var(\tau_{a,b}) = \Var(\mathbb{E}(\tau_{\dt,b})) +  \sum_{y} P(a,y)\cdot \Var(\tau_{y,b}), \label{recurrence}
\end{eqnarray}
where $\mathbb{E}(\tau_{\dt,b})$ is the expected hitting time of $b$ from a random neighbor $y$ of $a$, which is a random quantity which depends on $y$, and the sum is over neighbors $y$ of $a$.
\end{lemma}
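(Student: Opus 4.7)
The plan is to apply the law of total variance with the conditioning variable $Y$ being the random state visited at time $1$, starting from $a$. By the strong Markov property (or just the Markov property at time $1$), conditional on $Y = y$, the remainder of the walk is distributed as a fresh walk started at $y$, so the conditional distribution of $\tau_{a,b}$ given $Y = y$ is the same as that of $1 + \tau_{y,b}$. In particular,
\[
\mathbb{E}(\tau_{a,b} \mid Y = y) = 1 + \mathbb{E}(\tau_{y,b}), \qquad \Var(\tau_{a,b} \mid Y = y) = \Var(\tau_{y,b}).
\]
Note that this is where we use $a \neq b$: if $a = b$ we would have $\tau_{a,b} = 0$ deterministically, and conditioning on the first step would not give the above identification.

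Next I would invoke the classical law of total variance stated just above in the excerpt, which yields
\[
\Var(\tau_{a,b}) = \mathbb{E}\bigl(\Var(\tau_{a,b} \mid Y)\bigr) + \Var\bigl(\mathbb{E}(\tau_{a,b} \mid Y)\bigr).
\]
The first summand expands, using the conditional variance identity above and the definition of $Y$ as the random neighbor chosen according to $P(a,\cdot)$, to
\[
\mathbb{E}(\Var(\tau_{Y,b})) = \sum_y P(a,y)\,\Var(\tau_{y,b}).
\]
For the second summand, observe that adding the constant $1$ does not change variance, so
\[
\Var\bigl(\mathbb{E}(\tau_{a,b} \mid Y)\bigr) = \Var\bigl(1 + \mathbb{E}(\tau_{Y,b})\bigr) = \Var\bigl(\mathbb{E}(\tau_{Y,b})\bigr),
\]
which is exactly $\Var(\mathbb{E}(\tau_{\dt,b}))$ in the notation of the lemma --- the random variable in question takes the value $\mathbb{E}(\tau_{y,b})$ with probability $P(a,y)$.

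Combining these two pieces gives the stated identity (\ref{recurrence}). There is no real obstacle; the content of the lemma is just unpacking the law of total variance in the hitting-time setting, with the only subtle point being the observation that $\Var(\tau_{a,b} \mid Y = y) = \Var(\tau_{y,b})$, which relies on the Markov property and the assumption $a \neq b$ so that the first step is genuinely taken. The same argument, applied with higher conditional moments or cumulants in place of variance, will generalize to yield recurrences for the higher moments needed for Theorem A.
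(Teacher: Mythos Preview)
Your proof is correct and follows essentially the same approach as the paper: both apply the law of total variance with the conditioning variable $Y$ equal to the state at time $1$, use the Markov property to identify the conditional law of $\tau_{a,b}$ given $Y=y$ with that of $1+\tau_{y,b}$, and then observe that the additive constant $1$ drops out of both the conditional variance and the variance of the conditional expectation. If anything, your phrasing is a bit cleaner in explicitly invoking the distributional identity $\tau_{a,b}\mid Y=y \;\stackrel{d}{=}\; 1+\tau_{y,b}$ rather than computing the conditional expectation via a sum.
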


\begin{proof}
We apply the law of total variance. We take $X$ to be the hitting time $\tau_{a,b}$, and $Y$ to be the (random) choice of the first step in the random walk. Looking first at the term $\Var(\mathbb{E}(\tau_{a,b}\mid Y))$ we see by definition, for any neighbor $y$ of $a$,
\begin{eqnarray*}
\mathbb{E}(\tau_{a,b} \mid Y)(y) &=& \sum_{i \geq 0} i \cdot \mathbb{P}(\mathbb{E}(\tau_{a,b} \mid Y = y) = i)\\
						&=& \sum_{i \geq 0} i \cdot \mathbb{P}(\mathbb{E}(\tau_{y,b} = i-1)\\
						&=& \mathbb{E}(\tau_{y,b}) + 1.
\end{eqnarray*}
Variance is unchanged by constant shifts, so $\Var(\mathbb{E}(\tau_{a,b}\mid Y)) = \Var(\mathbb{E}(\tau_{\dt,b}))$, as desired.

Next we consider $\mathbb{E}(\Var(X \mid Y))$. By definition we have,
\begin{eqnarray*}
\mathbb{E}(\Var(\tau_{a,b} \mid Y)) &=& \sum_y \Var(\tau_{a,b} \mid Y = y) \cdot P(a,y)\\
						   &=& \sum_y \Var(\tau_{y,b} + 1) \cdot P(a,y)\\
						   &=& \sum_y \Var(\tau_{y,b}) \cdot P(a,y).
\end{eqnarray*}
Note that we once again used that variance is unchanged by constant shifts. This concludes the proof.
\end{proof}

This is all we need to prove the main theorem of this section.

\begin{theorem}\label{varrational}
Let $Z_\dt$ denote a finitely generated $\FI$--set, and let $(X_{\dt,t},P_\dt)$ denote a Markov chain on $Z_\dt$. Then there exists a virtual relation $\Var$ such that for $n \gg 0$, the matrix $\Var_n:\RR Z_n \rightarrow \RR Z_n$ with entries $\Var_n(a,b) = \Var(\tau_{a,b})$ is the specialization of $\Var$ to $n$. In particular, if $\Ob$ is a stable orbit of $Z_\dt \times Z_\dt$, then the map
\[
n \mapsto \Var_n(\Ob_n)
\]
agrees with a rational function for all $n \gg 0$.
\end{theorem}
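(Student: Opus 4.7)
The plan is to imitate the proof of Theorem \ref{hittingthm}, replacing the one-step recurrence $Q = \mathbf{1} + P \cdot Q$ with the variance recurrence given by equation (\ref{recurrence}). As before, all relevant quantities will be shown to depend only on the orbit of $(a,b)$ under the diagonal $\Sn_n$-action, so the recurrence becomes a finite-dimensional linear system over $K = \RR(n)$, whose unique solution then has rational entries.

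First I would fix a roof $b$ and rewrite equation (\ref{recurrence}) in terms of $b$-roofed orbits. The second summand $\sum_y P_n(a,y) \Var(\tau_{y,b})$ is handled exactly as in Lemma \ref{matrixeqn}: gathering $y$'s with common $b$-roofed orbit $[y,b]$ and applying Lemma \ref{roofedorbit} yields
\[
\sum_y P_n(a,y)\Var(\tau_{y,b}) \;=\; \sum_{[y,b]\neq [b,b]} P^b_n([a,b],[y,b])\,\Var_n([y,b]).
\]
The first summand, the \emph{inhomogeneous} term $\Var(\mathbb{E}(\tau_{\dt,b}))$, is where the new work lies. Expanding, it equals
\[
\sum_y P_n(a,y)\,Q_n(y,b)^2 \;-\; \Bigl(\sum_y P_n(a,y)\,Q_n(y,b)\Bigr)^{2}.
\]
I would group each sum according to the diagonal $\Sn_n$-orbit of the triple $(a,y,b)$ in $Z_n^3$. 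For any such orbit $\mathcal{O}$, the values $P_n(a,y)$ and $Q_n(y,b)$ are constant on $\mathcal{O}_n$ by the equivariance of $P$ (recorded in Definition \ref{orbitgraph}) and of $Q$ (Theorem \ref{hittingthm}), and the count $|\{y:(a,y,b)\in\mathcal{O}_n\}|$ depends only on the orbit $[a,b]$ and is a polynomial in $n$, exactly as in the proof of Lemma \ref{roofedorbit} (using that $Z_\dt\times Z_\dt\times Z_\dt$ is finitely generated and invoking Theorem \ref{mainstructurethm}(2)). Consequently the entire inhomogeneous term is a rational function $C_n([a,b])$ of $n$ depending only on the orbit $[a,b]$.

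Having established this, the recurrence becomes the $K$-linear equation
\[
\Var = C + P^b|_{\widetilde{V^b_{Z_\dt}}}\cdot \Var
\]
on $\widetilde{V^b_{Z_\dt}}$, which is formally identical to equation (\ref{solvethis}). The operator $I - P^b|_{\widetilde{V^b_{Z_\dt}}}$ is invertible over $K$ by the argument already given (the eigenvector argument via Remark \ref{righteigen}), so the unique solution has entries in $K$, proving that $n\mapsto \Var_n(\Ob_n)$ agrees with a rational function for $n\gg 0$ and is the specialization of a virtual relation.

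The main obstacle is really the bookkeeping for the inhomogeneous term: one must carefully verify that expanding a squared expectation and collecting by triple-orbits produces a polynomial count times a product of rational functions, so as to land in $K$. This same obstacle re-appears, in larger form, for higher moments. The approach generalizes: for the $i$-th moment one uses the analogous conditional expansion $\mathbb{E}(\tau_{a,b}^{i}) = \sum_{j=0}^{i}\binom{i}{j}\sum_y P_n(a,y)\,\mathbb{E}(\tau_{y,b}^{i-j})$, and one argues by induction on $i$ that the lower-moment quantities previously established to be specializations of virtual relations feed into an inhomogeneous term of the same form, with the same operator $I - P^b|_{\widetilde{V^b_{Z_\dt}}}$ to invert. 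Central moments and cumulants are then polynomial expressions in the raw moments, hence are themselves specializations of virtual relations, which is what Remark \ref{highermoments} would formalize.
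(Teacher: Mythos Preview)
Your proposal is correct and follows essentially the same approach as the paper: both reduce the variance recurrence (\ref{recurrence}) to a linear equation on $\widetilde{V^b_{Z_\dt}}$ with the same operator $I - P^b|_{\widetilde{V^b_{Z_\dt}}}$ to invert, the only work being to check that the inhomogeneous term $\Var(\mathbb{E}(\tau_{\dt,b}))$ is rational in $n$ (the paper simply cites Theorem \ref{hittingthm}, while you spell out the triple-orbit bookkeeping explicitly). For the higher moments your inductive binomial expansion of raw moments is a mild variant of the paper's use of the Law of Total Cumulance in Remark \ref{highermoments}; both routes produce the same linear system and the same conclusion.
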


\begin{proof}
As with the proof of Theorem \ref{hittingthm}, we work on the $\RR(n)$-vector space of roofed orbits with a fixed choice of roof. Looking at equation (\ref{recurrence}) we see that $\Var_n(a,b)$ satisfies a linear equation almost identical to $Q_n(a,b)$. The techniques used in the proof of Theorem \ref{hittingthm} will therefore apply in identical fashion in this setting as well, so long as we know that the term $\Var(\mathbb{E}(\tau_{\dt,b}))$ can be expressed as a rational function in $n$. This follows from Theorem \ref{hittingthm}.
\end{proof}

\begin{remark}\label{highermoments}
The conclusion of Theorem \ref{varrational} remains true when one replaces the variance with any of the higher moments. The proof is largely the same, and proceeds in two steps. The Law of Total Variance is a special case of the Law of Total Cumulance. As above, this theorem tells us that the $i$--th cumulant can be expressed as the conditional expected value of the $i$-th cumulant, plus a term involving only lower moments. An induction argument therefore implies that the $i$-th cumulant agrees with a rational function in $n$. Following this, one uses standard facts to write the $i$--th moment as a polynomial combination of cumulants.
\end{remark} 

\begin{remark}
It would be interesting to be able to bound the relative sizes of the moments --- for instance, are there conditions under which the hitting times have small standard deviation compared to their expected value? Our techniques do not appear to give such bounds.
\end{remark}

\end{document}